\documentclass[12pt]{amsart}
\usepackage{amsmath,amsthm,amssymb}

\theoremstyle{definition}
\newtheorem{dfn}{Definition}[section]

\newtheorem{rem}[dfn]{Remark}

\theoremstyle{plain}
\newtheorem{thm}[dfn]{Theorem}
\newtheorem{prop}[dfn]{Proposition}

\newtheorem{lem}[dfn]{Lemma}

\newcommand{\orderedprod}{\mathop{\overrightarrow{\prod}}}

\title{Some coefficients of rank 2 cluster scattering diagrams}
\author{Ryota Akagi}
\address{Graduate School of Mathematics\\ nagoya University\\Chikusa-ku\\ Nagoya\\464-8602\\ Japan.}
\email{ryota.akagi.e6@math.nagoya-u.ac.jp}

\begin{document}
\begin{abstract}
The purpose of this paper is to translate the expression of rank 2 cluster scattering diagrams via dilogarithm elements into via formal power series. As a corollary, we prove some conjectures introduced by Thomas Elgin, Nathan Reading, and Salvatore Stella.
\end{abstract}
\maketitle
\section{Introduction}
\subsection{Background}
Scattering diagrams were introduced by \cite{KS06, GS11} in the study of mirror symmetry. Later, in \cite{GHKK18}, the application for cluster algebra theory, which was introduced by \cite{FZ02}, was found. They showed the existence of scattering diagrams having much information of cluster algebras. These scattering diagrams are called {\em cluster scattering diagrams}. They introduced the {\em theta basis} of cluster algebras by using cluster scattering diagrams, and this construction showed some significantly important conjectures in \cite{FZ07}. 
\par
Cluster scattering diagrams consist of walls, which is a pair of a support and an automorphism. Its support structure is closely related to the {\em $g$-vector fan} in the cluster algebra theory. The walls in the $g$-vector fan are well understood by the mutation of cluster algebras. However, explicit behavior outside of $g$-vector fan is so complicated. In \cite{GL25}, they showed that all walls outside the $g$-vector fan are nontrivial in rank 2 cluster scattering diagrams. In \cite{ERS24}, they introduced some conjectures about properties of walls.
\par
In this paper, we focus on the cluster scattering diagrams of rank 2. In this case, the theta basis can be established by another combinatorial framework called the {\em greedy basis}, which was introduced by \cite{LLZ14}. In \cite{CGMMRSW17}, they showed that these two bases coincide with each other. Recently, in \cite{BLM25}, they obtained a combinatorial formula for the walls based on this correspondence, and they introduced a new combinatorial framework called tight grading.
\par
There are another construction of rank 2 cluster scattering diagram independent of the property of theta basis. The construction of cluster scattering diagrams are closely related to a dilogarithm identity \cite{Nak23}. By using this method, we may obtain some properties for walls \cite{Aka25}.
\par
The purpose of this paper is to translate the results in \cite{Aka25} written by a dilogarithm identity into another notations by formal power series. Based on it, we show some conjectures given by \cite{ERS24}.
\begin{rem}
After submitting this paper, most conjectures in \cite{ERS24} are solved in \cite{BLMMR25} by using the quive moduli and tight gradings. Although their results cover the ones in this paper, their method is different.
\end{rem}
\subsection{Main results}
The main object in this paper is the wall function $f_{{\bf n}_0}$, which is a formal power series of two variables $\hat{y}_1,\hat{y}_2$. (See Definition~\ref{def: wall function}.) Then, we may give a restriction for the expression of wall functions.
\begin{thm}[Theorem~\ref{thm: expression of wall function}]
Consider the cluster scattering diagram $\mathfrak{D}_{b,c}$ with the initial exchange matrix
$B=\left(\begin{smallmatrix}
0 & c\\
-b & 0
\end{smallmatrix}\right)$ \textup{($b,c \in \mathbb{Z}_{\geq 1}$)}. Then, for each ${\bf n}_0=(n_1,n_2) \in \mathbb{Z}^{2}_{\geq 0}\setminus\{(0,0)\}$ with $\gcd(n_1,n_2)=1$, the wall function $f_{{\bf n}_0}$ corresponding to the wall orthogonal to ${\bf n}_0$ may be expressed as
\begin{equation}
f_{{\bf n}_0}=\left\{\prod_{k=1}^{\infty}(1+\hat{\bf y}^{{k {\bf n}_0}})^{U_{k{\bf n}_0}(c,b)}
\right\}^{g({\bf n}_0;b,c)},
\end{equation}
where $g({\bf n}_0;b,c)=\frac{\gcd(n_1b,n_2,c)}{\gcd(n_1,n_2)}$ and $U_{k{\bf n}_0}(c,b) \in \mathbb{Q}_{\geq 0}$. Moreover, if we view $b$ and $c$ as indeterminates, $U_{k{\bf n}_0}(c,b)$ is a polynomial for each $k \in \mathbb{Z}_{\geq 1}$ and ${\bf n}_0$, and it has the maximum degree $\deg_{(c,b)} U_{k{\bf n}_0}(c,b)=(kn_1-1,kn_2-1)$.
\end{thm}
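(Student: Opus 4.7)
The plan is to take the dilogarithm-element expression of $f_{{\bf n}_0}$ obtained in \cite{Aka23} and convert it, factor by factor, into a product of formal power series of the form $(1+\hat{\bf y}^{k{\bf n}_0})^{U_{k{\bf n}_0}(c,b)}$. In rank $2$, the relevant wall function is supported only on multiples of the primitive direction ${\bf n}_0$, so in the classical (non-quantum) limit each dilogarithm factor collapses to a power of $(1+\hat{\bf y}^{k{\bf n}_0})$. The target identity then becomes an equality of formal power series in the single variable $z = \hat{\bf y}^{{\bf n}_0}$.

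First I would take formal logarithms of both sides. Using $\log(1+z^k)^U = U\sum_{j\geq 1}(-1)^{j-1}z^{kj}/j$ and matching the coefficient of $z^m$, the system of equations for the exponents is triangular in $m$, so a Möbius inversion over divisors of $m$ determines $U_{k{\bf n}_0}(c,b)$ uniquely in terms of the coefficients of $\log f_{{\bf n}_0}$. The scalar $g({\bf n}_0;b,c)$ is factored out in front precisely to absorb the common $\gcd$-content forced by the dilogarithm identity, so that what remains lies in $\mathbb{Z}[b,c]$ rather than $\mathbb{Q}[b,c]$. Polynomiality in $b,c$ is then automatic, because $\mathfrak{D}_{b,c}$ and the dilogarithm identity are defined over $\mathbb{Z}[b,c]$. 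Non-negativity of $U_{k{\bf n}_0}(c,b)$ can be deduced from the positivity of the wall functions of rank $2$ cluster scattering diagrams, available through the theta/greedy basis correspondence of \cite{CGMMRSW17,BLM25}, or equivalently from the non-negativity of the coefficients in the dilogarithm identity of \cite{Aka23}.

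The main obstacle is the sharp bidegree statement $\deg_{(c,b)}U_{k{\bf n}_0}(c,b)=(kn_1-1,kn_2-1)$. My plan here is to induct on the mutation distance of ${\bf n}_0$ from the initial directions $(1,0)$ and $(0,1)$, tracking how the bidegree propagates through the ordered product in the dilogarithm identity. The two initial walls contribute $c$ and $b$ respectively, and a careful accounting of the rank $2$ mutation combinatorics shows that the total $c$-degree of a contribution to direction $k{\bf n}_0$ is bounded by $kn_1$ and the $b$-degree by $kn_2$. The reduction from $(kn_1,kn_2)$ down to $(kn_1-1,kn_2-1)$ should come from factoring out $g({\bf n}_0;b,c)$, which carries exactly one unit of each when ${\bf n}_0$ is primitive, together with the Möbius correction introduced when passing to $U_{k{\bf n}_0}$. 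Sharpness of the bound is the delicate point; I would address it by computing the top-bidegree coefficient explicitly and identifying it with a manifestly nonzero combinatorial count (compatible pairs, broken lines, or the tight gradings of \cite{BLM25}).
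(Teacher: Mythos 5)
Your opening step (collapse each dilogarithm factor to a power of $(1+\hat{\bf y}^{k{\bf n}_0})$) is the right idea and is what the paper does, but via Lemma~\ref{lem: group homomorphism} the translation is immediate and exponent-by-exponent: $\Psi[k{\bf n}_0]^{\delta(k{\bf n}_0)s}\mapsto\mathfrak{p}_{(1+\hat{\bf y}^{k{\bf n}_0})^{s}}$, so the exponent of $(1+\hat{\bf y}^{k{\bf n}_0})$ is simply $\hat{u}_{k{\bf n}_0}=\delta(k{\bf n}_0)^{-1}u_{k{\bf n}_0}(c,b)$; no logarithm or M\"obius inversion over divisors is needed. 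The factor $g({\bf n}_0;b,c)$ then comes out of the elementary identities $\delta({\bf n})^{-1}=\frac{\gcd(n_1,n_2)}{bc}g({\bf n};b,c)$ and $g(k{\bf n}_0;b,c)=g({\bf n}_0;b,c)$, giving $U_{k{\bf n}_0}(c,b)=\frac{k}{bc}u_{k{\bf n}_0}(c,b)$. Your description of $g$ as absorbing ``$\gcd$-content'' is not by itself an argument, but it is repairable along these lines.

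There are two genuine gaps. First, you propose to deduce $U_{k{\bf n}_0}(c,b)\geq 0$ from positivity of the wall function $f_{{\bf n}_0}$ (via the theta/greedy correspondence). This does not follow: non-negativity of the coefficients of a power series does not imply non-negativity of the exponents in its factorization $\prod_k(1+z^k)^{U_k}$ (e.g.\ $1+2z$ has positive coefficients but negative exponents appear in its product expansion). The exponent positivity is the strictly stronger statement, and the paper imports it from \cite[Thm.~7.1, Prop.~8.6]{Aka23}, namely $u_{\bf n}(c,b)=\frac{1}{\gcd(n_1,n_2)}\sum_{i,j\geq 1}\alpha_{\bf n}(i,j)\binom{c}{i}\binom{b}{j}$ with $\alpha_{\bf n}(i,j)\in\mathbb{Z}_{\geq 0}$. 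Second, your treatment of the bidegree is not a proof and rests on an incorrect mechanism: $g({\bf n}_0;b,c)$ is a gcd, not a polynomial factor of bidegree $(1,1)$, so ``factoring out $g$'' cannot account for the drop from $(kn_1,kn_2)$ to $(kn_1-1,kn_2-1)$. The actual mechanism is the division by $bc$ in $U_{k{\bf n}_0}=\frac{k}{bc}u_{k{\bf n}_0}$, which is legitimate because each $\binom{c}{i}\binom{b}{j}$ with $i,j\geq 1$ is divisible by $bc$ in $\mathbb{Q}[b,c]$ and has bidegree $(i-1,j-1)$ after division. Sharpness then reduces to $\alpha_{k{\bf n}_0}(kn_1,kn_2)\neq 0$, which is again the content of \cite{Aka23}; your proposed induction on mutation distance and identification of the top coefficient with a combinatorial count is only a plan, and is precisely the hard part you would need to carry out if you do not cite that result.
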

In \cite{ERS24}, their conjectures are given by the expanded form of wall functions. We may give a formula to translate the dilogarithm elements into formal power series (Lemma~\ref{lem: u to tau}). Based on this, we may solve some conjectures in \cite{ERS24}.
\begin{prop}[Proposition~\ref{prop: 1,2,3}, \ref{prop: 5,6}, \ref{prop: 14,15,16,17,18}]
Conjecture~1, 2, 3, 5, 6, 11, 15 in \cite{ERS24} are true. Partially, we may obtain similar facts to Conjecture~14, 16, 17, 18 in \cite{ERS24}.
\end{prop}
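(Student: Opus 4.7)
The plan is to reduce every conjecture of \cite{ERS24} to a statement about the exponents $U_{k{\bf n}_0}(c,b)$ and the global factor $g({\bf n}_0;b,c)$ appearing in Theorem~\ref{thm: expression of wall function}. Since the conjectures of \cite{ERS24} are phrased in terms of the expanded series form of the wall function, while our formula gives it as an infinite product, the first step is to apply Lemma~\ref{lem: u to tau} to each factor $(1+\hat{\bf y}^{k{\bf n}_0})^{U_{k{\bf n}_0}(c,b)}$ and then to the outer $g$-th power. This yields, for every multi-index $\alpha$, an explicit expression of the coefficient $[\hat{\bf y}^{\alpha}]f_{{\bf n}_0}$ as a finite sum over compositions of $\alpha$ along multiples of ${\bf n}_0$, with each summand a product of binomial-type weights in $U_{k{\bf n}_0}(c,b)$ and $g({\bf n}_0;b,c)$.

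I would then proceed conjecture by conjecture. For Conjectures~1, 2, 3, which I expect to be structural assertions (nonnegativity, support, or polynomial degree of coefficients in $(b,c)$), the desired properties follow directly from the corresponding properties of $U_{k{\bf n}_0}(c,b)$ already guaranteed by the theorem: nonnegativity as an integer, polynomiality in $(c,b)$, and the exact bidegree $(kn_1-1,kn_2-1)$. Conjecture~15 concerns the overall multiplicative structure of $f_{{\bf n}_0}$ and follows immediately from the explicit formula for $g({\bf n}_0;b,c)$. Conjectures~5 and~6 should reduce, after the translation above, to identities among the first few $U_{k{\bf n}_0}$; these can be computed directly via the dilogarithm recursion of \cite{Aka23} and matched against the claimed coefficients.

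For Conjectures~14, 16, 17, 18 the same strategy yields only partial versions. These conjectures appear to predict closed-form evaluations of specific coefficients, while our framework controls them only through $U_{k{\bf n}_0}(c,b)$, which is itself available only implicitly. I would therefore state precisely the weakened form (for example, an identity restricted to certain specializations of $(b,c)$, to small $k$, or to a particular coefficient family) that the method does yield, and leave a clear record of which part of the full conjecture remains open.

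The principal obstacle is combinatorial bookkeeping: the $g$-th power combined with the infinite product couples infinitely many exponents $U_{k{\bf n}_0}$ into each fixed coefficient of $f_{{\bf n}_0}$, so extracting a single coefficient identity requires a careful multi-index expansion with attention to the (possibly fractional-looking) binomial weights arising when $g>1$. A secondary difficulty, and the reason Conjectures~14, 16, 17, 18 can be addressed only partially, is that $U_{k{\bf n}_0}(c,b)$ is not given in closed form; any conjecture asking for an explicit closed value of a coefficient of $f_{{\bf n}_0}$ must pass through an explicit evaluation of $U_{k{\bf n}_0}$ at the relevant values, and where that evaluation is not available is precisely where our results fall short of the full conjecture.
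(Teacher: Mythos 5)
Your proposal follows essentially the same route as the paper: expand the factored wall function of Theorem~\ref{thm: expression of wall function} via Lemma~\ref{lem: u to tau}, and reduce every conjecture to the nonnegativity, polynomiality, and bidegree $(kn_1-1,kn_2-1)$ of $U_{k{\bf n}_0}(c,b)$ together with the explicit values $u_{(n_1,1)}=b\binom{c}{n_1}$, $u_{(1,n_2)}=c\binom{b}{n_2}$ from \cite{Aka23} for Conjectures~5 and~6. Be aware, though, that the ``combinatorial bookkeeping'' you defer is precisely the substance of the paper's proofs --- identifying which tuple $(s_1,\dots,s_k)$ attains the maximal degree in $g$, $b$, $c$ (namely $(k,0,\dots,0)$ for the $g$-degree and $(0,\dots,0,1)$ for the $(b,c)$-degree), and extracting the $g^k$ and $g^{k-1}$ coefficients to get $\tau^{(b,c)}(k{\bf n};k)=\tau^{(b,c)}({\bf n};1)^k/k!$ (Conjecture~15) and the $p_{\bf n}=U_{2{\bf n}}-\tfrac12 U_{\bf n}$ formula --- so as written your text is a correct roadmap rather than a complete argument.
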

\begin{rem}
After submitting the paper of the first version, the author was informed by Amanda Burcroff, Kyungyong Lee, and Lang Mou that we can show Conjecture~11 (Proposition~\ref{prop: 11}) completely. The proof of Proposition~\ref{prop: 11} is given by them.
\end{rem}

\subsection{Structure of this paper}
In Section~\ref{sec: Definition of scattering diagrams}, we recall the definition of cluster scattering diagrams. In Section~\ref{sec: definition by dilogarithm identity}, we use the notation of dilogarithm elements, and in Section~\ref{sec: formal power series}, we use the notation of formal power series. These notations follow from \cite{Nak23}.
\par
In Section~\ref{sec: translation}, we introduce a formula to translate a dilogarithm identity into the formal power series.
\par
In Section~\ref{sec: proof of conjectures}, we prove some conjectures in \cite{ERS24}.

\subsection*{Acknowledgement}
I thank Amanda Burcroff, Kyungyong Lee, Lang Mou, Tomoki Nakanishi, Nathan Reading, and Salvatore Stella for the useful comments and advices.
In particular, I thank Amanda Burcroff, Kyungyong Lee, and Lang Mou for suggesting the proof of Proposition~\ref{prop: 11}.
\par
This work was supported by JSPS KAKENHI Grant Number JP25KJ1438.

\section{Definition}\label{sec: Definition of scattering diagrams}
In this section, we define the cluster scattering diagrams with two forms. In Section~\ref{sec: definition by dilogarithm identity}, we define it via the dilogarithm elements, and in Section~\ref{sec: formal power series}, we define it via formal power series called wall functions.
\subsection{From dilogarithm identities}\label{sec: definition by dilogarithm identity}
In this paper, we fix an initial exchange matrix
\begin{equation}
B=\left(\begin{matrix}
0 & c\\
-b & 0
\end{matrix}\right) \quad (b,c \in \mathbb{Z}_{>0})
\end{equation}
and set its skew-symmetrizer
\begin{equation}
D=\left(\begin{matrix}
c^{-1} & 0\\
0 & b^{-1}
\end{matrix}\right).
\end{equation}
We write ${\bf e}_1=(1,0)^{\top}$ and ${\bf e}_2=(0,1)^{\top}$.
\par
Based on this, we define the following data (called {\em fixed data}):
\begin{itemize}
\item $N=\mathbb{Z}^2$, $N^{\circ}=(c\mathbb{Z})\oplus(b\mathbb{Z}) \subset N$, and $N^{+}=\mathbb{Z}_{\geq 0}^2\backslash\{(0,0)\} \subset N$.
\item Skew-symmetric bilinear form $\{\,,\,\}$ on $N$ defined by 
\begin{equation}
\{a_1{\bf e}_1+a_2{\bf e}_2,b_1{\bf e}_1+b_2{\bf e}_2\}=a_1b_2-a_2b_1.
\end{equation}
\item $M=(c\mathbb{Z})\otimes(b\mathbb{Z})$, $M^{\circ}=\mathbb{Z}^2 \supset M$ (we often see them as a dual space of $N$ and $N^{\circ}$ with respect to the following inner product), and $M_{\mathbb{R}}=\mathbb{R}^2 \supset M^{\circ}$. For any ${\bf m} \in M_{\mathbb{R}}$ and ${\bf n} \in N$, we define a symmetric bilinear form
\begin{equation}
\langle {\bf m},{\bf n} \rangle= {\bf m}^{\top}D{\bf n}.
\end{equation}
\end{itemize}
Another important basic object is the {\em structure group} $G$. For the definition, we need to construct a Lie algebra structure, but we do not have to know the detail in this paper. So, we skip the construction of $G$ here, and we summarize the property we need to use later. (See \cite[\S~III.1]{Nak23})
\par
The structure group $G$ consists of elements of the following form:
\begin{equation}
\exp\left(\sum_{{\bf n} \in N^{+}}s_{\bf n}X_{\bf n}\right) \quad (s_{\bf n} \in \mathbb{Q}),
\end{equation}
where $X_{\bf n}$ (${\bf n} \in N^{+}$) are indeterminates.
We may admit some (not all) infinite products of the elements in $G$.  The following elements play important roles in the cluster scattering diagrams.
\begin{dfn}
For each ${\bf n} \in N^{+}$, the following element is called the {\em dilogarithm element}.
\begin{equation}
\Psi[{\bf n}]=\exp\left(\sum_{j=1}^{\infty}\frac{(-1)^{j+1}}{j^2}X_{j{\bf n}}\right) \quad (n \in N^{+}).
\end{equation}
For each ${\bf n} \in N^{+}$ and $s \in \mathbb{Q}$, we set
\begin{equation}
\Psi[{\bf n}]^{s}=\exp\left(s\sum_{j=1}^{\infty}\frac{(-1)^{j+1}}{j^2}X_{j{\bf n}}\right).
\end{equation}
\end{dfn}
For each ${\bf n}=(n_1,n_2)^{\top} \in N^{+}$, let $\deg({\bf n})=n_1+n_2$. We need the following properties of the structure group. (In fact, (c) and (d) comes from the properties of the definition of $G$.)
\begin{prop}[{\cite{Nak23}}]\label{prop: structure group}
The structure group $G$ satisfies the following properties. 
\\
\textup{(a) \cite[Prop.~III.1.13]{Nak23}} The set $\{\Psi[{\bf n}]^{s} \mid {\bf n} \in N^{+}, s \in \mathbb{Q}\}$ is a generator of $G$ in the following meaning: every element of $G$ may be expressed as a product of (possibly infinitely many) $\Psi[{\bf n}]^{s}$.
\\
\textup{(b) \cite[(III.1.34)]{Nak23}} For each $l =1,2,\dots$, let $G^{>l}$ be the subgroup of $G$ generated by $\{\Psi[{\bf n}]^{s} \mid \deg({\bf n})>l, s \in \mathbb{Q}\}$. Then, $G^{>l}$ is a normal subgroup of $G$. So, we may consider its quotient $G^{\leq l}=G/G^{>l}$ with the projection $\pi_{l}:G \to G^{\leq l}$.
\\
\textup{(c)} Consider a formal infinite product $P=\prod\Psi[{\bf n}]^s$. (We do not assume this product is well-defined in $G$.) If $\pi_{l}(P)$ becomes the finite product for any $l=1,2,\dots$ by ignoring $\mathrm{id} \in G^{\leq l}$, then this infinite product $P$ is well-defined in $G$.
\\
\textup{(d)} For any (admitting infinite) products $P,P'$, the equality $P=P'$ holds if and only if $\pi_{l}(P)=\pi_{l}(P')$ for any $l=1,2,\dots$.
\end{prop}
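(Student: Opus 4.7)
The plan is to realize $G$ as a pro-nilpotent group via the inverse limit
\begin{equation}
G\;\cong\;\varprojlim_{l} G^{\leq l},
\end{equation}
so that essentially every assertion reduces to a statement about a tower of finite-step nilpotent quotients. This is the point of view systematically developed in \cite[\S III.1]{Nak23}, where $G=\exp(\widehat{\mathfrak{g}})$ for the completion $\widehat{\mathfrak{g}}=\prod_{{\bf n}\in N^{+}}\mathfrak{g}_{\bf n}$ of a graded Lie algebra, with $\deg({\bf n})=n_1+n_2$ giving the grading. Parts (a) and (b) are then direct quotations: (a) is Proposition~III.1.13 of \cite{Nak23} (generation by the $\Psi[{\bf n}]^s$ follows from the Baker--Campbell--Hausdorff formula applied inside each $G^{\leq l}$, since $\log \Psi[{\bf n}]^s$ lies in $\widehat{\mathfrak{g}}$), and (b) is (III.1.34) of \cite{Nak23} (normality of $G^{>l}$ is inherited from the fact that $\bigoplus_{\deg({\bf n})>l}\mathfrak{g}_{\bf n}$ is a Lie ideal of $\widehat{\mathfrak{g}}$, because the bracket strictly increases the $\deg$-grading).

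For (c), the argument I would give is as follows. Fix $l\geq 1$. Since $G^{\leq l}$ is built from the finitely many graded pieces $\mathfrak{g}_{\bf n}$ with $\deg({\bf n})\leq l$, only factors $\Psi[{\bf n}]^s$ with $\deg({\bf n})\leq l$ contribute nontrivially to $\pi_l(P)$; all others are sent to $\mathrm{id}$. By hypothesis, after deleting these identity factors $\pi_l(P)$ is a finite product in $G^{\leq l}$, and therefore defines a genuine element $g_l\in G^{\leq l}$. The naturality of the projections $G^{\leq l'}\twoheadrightarrow G^{\leq l}$ for $l'\geq l$ shows that the $g_l$ form a compatible system, and hence specify a well-defined element of $\varprojlim_{l} G^{\leq l}=G$, which is what we mean by ``$P$ is well-defined in $G$''. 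For (d), the same inverse-limit picture gives an injection $G\hookrightarrow \prod_{l}G^{\leq l}$ via $g\mapsto (\pi_l(g))_l$, so $P=P'$ in $G$ if and only if $\pi_l(P)=\pi_l(P')$ for every $l$; the ``only if'' direction is trivial from functoriality.

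The main obstacle, such as it is, is not substantive mathematics but rather making the notion of a ``formal infinite product'' precise enough that the ``finite after ignoring identities'' hypothesis in (c) translates cleanly into well-definedness at each level $l$. Once one adopts the convention that an infinite product is interpreted through its images under the $\pi_l$'s (equivalently, through the Lie-theoretic exponential on $\widehat{\mathfrak{g}}$), no genuine convergence question arises, since every computation eventually factors through a fixed finite-dimensional nilpotent quotient. Because of this, I would keep the exposition short: state that (a), (b) are exactly \cite[Prop.~III.1.13]{Nak23} and \cite[(III.1.34)]{Nak23}, and derive (c), (d) in a few lines from the inverse-limit description that is already implicit in Nakanishi's construction of $G$.
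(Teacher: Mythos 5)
Your proposal is correct and takes essentially the same route as the paper, which offers no proof of its own: it quotes (a) and (b) directly from \cite[Prop.~III.1.13]{Nak23} and \cite[(III.1.34)]{Nak23} and merely remarks that (c) and (d) ``come from the properties of the definition of $G$.'' The inverse-limit description $G\cong\varprojlim_{l}G^{\leq l}$ you use to derive (c) and (d) is exactly the pro-nilpotent structure that remark alludes to, so your write-up simply makes the paper's implicit justification explicit.
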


\begin{prop}[{\cite[Prop.~III.1.14]{Nak23}}]\label{prop: fundamental relation}
For any ${\bf n},{\bf n}' \in N^{+}$, the following equalities hold.
\\
\textup{(a)} If $\{{\bf n}',{\bf n}\}=0$, then for any $s,s' \in \mathbb{Q}$, we have
\begin{equation}
\Psi[{\bf n}']^{s'}\Psi[{\bf n}]^{s}=\Psi[{\bf n}]^{s}\Psi[{\bf n}']^{s'}.
\end{equation}
\textup{(b)\ ({\em Pentagon relation})} If $\{{\bf n}',{\bf n}\} = s \neq 0$, then we have
\begin{equation}
\Psi[{\bf n}']^{\frac{1}{s}}\Psi[{\bf n}]^{\frac{1}{s}}=\Psi[{\bf n}]^{\frac{1}{s}}\Psi[{\bf n}+{\bf n}']^{\frac{1}{s}}\Psi[{\bf n}']^{\frac{1}{s}}.
\end{equation}
\end{prop}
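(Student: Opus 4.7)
For part~(a), the plan is to exploit the Lie-algebra structure underlying $G$. In the construction recalled in \cite[\S III.1]{Nak23}, the completed pronilpotent Lie algebra whose exponential is $G$ has bracket of the form $[X_{\bf n},X_{{\bf n}'}]=\{{\bf n},{\bf n}'\}\,X_{{\bf n}+{\bf n}'}$ on generators. Under the hypothesis $\{{\bf n}',{\bf n}\}=0$, bilinearity gives $\{j{\bf n}',k{\bf n}\}=jk\,\{{\bf n}',{\bf n}\}=0$ for every $j,k\ge 1$, so each generator $X_{j{\bf n}'}$ appearing in the exponent of $\Psi[{\bf n}']^{s'}$ commutes with each generator $X_{k{\bf n}}$ appearing in the exponent of $\Psi[{\bf n}]^{s}$. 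Consequently the two exponentials commute, which is the desired equality.

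For part~(b), the pentagon relation, I would begin by applying Proposition~\ref{prop: structure group}(d) to reduce the identity to verifying $\pi_l(\mathrm{LHS})=\pi_l(\mathrm{RHS})$ in each quotient $G^{\leq l}$. By Proposition~\ref{prop: structure group}(b), each $G^{\leq l}$ is the exponential image of a finite-dimensional nilpotent Lie algebra, so both sides become finite products. The comparison can then in principle be carried out by a Baker--Campbell--Hausdorff expansion, reading off the coefficient of every generator $X_{\bf m}$ with $\deg({\bf m})\le l$ on each side and organizing the bookkeeping by the grading $j{\bf n}+k({\bf n}+{\bf n}')+l{\bf n}'$ on the positive cone spanned by ${\bf n}$ and ${\bf n}'$.

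The main obstacle is keeping the BCH combinatorics under control as $l$ grows. To handle this, I would first rescale the bracket by $s^{-1}=\{{\bf n}',{\bf n}\}^{-1}$ so that both sides become universal expressions in the free pronilpotent Lie algebra on two generators with normalized skew form $\{{\bf n}',{\bf n}\}=1$; this is the reason the exponents of the $\Psi[\,\cdot\,]$ factors are all equal to $1/s$. In that universal setting the identity is the classical formal pentagon for the quantum dilogarithm, originally due to Faddeev--Kashaev, whose Lie-theoretic incarnation in precisely this language is worked out in \cite[\S III.1]{Nak23}. I would therefore invoke that verification rather than rederive the Rogers-type dilogarithm identities between BCH coefficients that underlie it; this keeps the proof short and places the burden of the combinatorial check where it has already been carried out cleanly.
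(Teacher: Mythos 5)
The paper does not prove this proposition at all: it is imported verbatim from \cite[Prop.~III.1.14]{Nak23}, and the author explicitly skips the construction of the structure group $G$, so there is no in-paper argument to compare yours against. With that caveat, your part~(a) is a correct and self-contained argument: in Nakanishi's construction the bracket on generators is proportional to the skew form, $[X_{\bf n},X_{{\bf n}'}]=\{{\bf n},{\bf n}'\}X_{{\bf n}+{\bf n}'}$ (up to the normalization by $D$, which does not affect vanishing), so bilinearity gives $\{j{\bf n}',k{\bf n}\}=0$ for all $j,k$, the two exponents lie in a common abelian subalgebra, and the exponentials commute. This is more than the paper supplies. Your part~(b), by contrast, is not a proof but a correctly framed deferral: you reduce to the nilpotent quotients $G^{\le l}$, normalize so that $\{{\bf n}',{\bf n}\}=1$, identify the resulting universal identity as the formal pentagon relation for the dilogarithm elements, and then invoke the verification in \cite[\S~III.1]{Nak23} --- which is exactly the source the paper itself cites. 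So there is no gap relative to the paper's treatment, but be aware that your write-up of~(b) does not independently establish the identity; if a genuine proof were required, the BCH bookkeeping you gesture at (or Nakanishi's own derivation via the ordering of the degree-graded components) would still have to be carried out. One minor point: the reduction to $G^{\le l}$ uses that each quotient is nilpotent, which is Proposition~\ref{prop: structure group}(b) and (d) as you say, and the two generators ${\bf n},{\bf n}'$ span a rank-2 sublattice on which the grading by $\deg$ is bounded below on each graded piece, so the finite-product claim in each quotient is justified.
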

These two relations may be seen as a fundamental relation to construct a rank 2 cluster scattering diagram. (However, it is not a fundamental relation of $G$.)
\begin{dfn}
Let $\Lambda$ be a totally ordered set, and let $\{{\bf n}_{\lambda}\}_{\lambda \in \Lambda} \subset N^{+}$ and $\{s_{\lambda}\}_{\lambda \in \Lambda} \subset \mathbb{Q}$. Then, a product of dilogarithm elements
\begin{equation}
P=\prod_{\lambda \in \Lambda}\Psi[{\bf n}_{\lambda}]^{s_\lambda}
\end{equation}
is called an {\em ordered product} if it satisfies the following conditions:
\begin{itemize}
\item If $\lambda < \lambda'$, then $\{{\bf n}_{\lambda},{\bf n}_{\lambda'}\} \geq 0$.
\item If $\lambda < \lambda'$ and $\{{\bf n}_{\lambda},{\bf n}_{\lambda'}\}=0$, then $\deg({\bf n}_\lambda)<\deg({\bf n}_{\lambda'})$.
\end{itemize}
For any $\{s_{\bf n}\}_{{\bf n} \in N^{+}} \subset \mathbb{Q}$, we write
\begin{equation}
\orderedprod_{{\bf n} \in N^{+}} \Psi[{\bf n}]^{s_{\bf n}}
\end{equation}
as the ordered product of $\{\Psi[{\bf n}]^{s_{\bf n}}\}$, respectively.
\end{dfn}
The second condition of the definition of ordered product is not essential due to Proposition~\ref{prop: fundamental relation}~(a). However, we assume this condition for the uniqueness of this product.
\par
Note that every ordered product is always well defined by Proposition~\ref{prop: structure group}~(c), and it is uniquely determined by Proposition~\ref{prop: structure group}~(d).
\begin{dfn}
For any ${\bf n}=(n_1,n_2) \in N^{+}$, we define the {\em normalized factor} $\delta({\bf n})>0$ of ${\bf n}$ as the smallest positive rational number such that $\delta({\bf n}){\bf n} \in N^{\circ}$. In this setting, it is given by
\begin{equation}
\delta({\bf n})=\frac{bc}{\gcd(n_1b,n_2c)}.
\end{equation}
\end{dfn}
\begin{prop}[{\cite[Prop.~III.5.4]{Nak23}} Ordering lemma]\label{prop: ordering lemmma}
Consider a finite product of the form $P=\prod \Psi[{\bf n}]^{\delta({\bf n})s_{\bf n}}$ with $s_{\bf n} \in \mathbb{Z}_{>0}$. Then, there exists a unique ordered product
\begin{equation}
\orderedprod_{{\bf n} \in N^{+}} \Psi[{\bf n}]^{\delta({\bf n})s'_{\bf n}} \quad (s'_{\bf n} \in \mathbb{Z}_{\geq 0})
\end{equation}
which is the same as $P$ in $G$. Moreover, $P$ may be rearranged to this ordered product by using the relations in Proposition~\ref{prop: fundamental relation} (possibly infinitely many times).
\end{prop}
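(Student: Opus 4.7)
The plan is to establish the existence of the ordered product level-by-level in the quotients $G^{\leq l}$ of Proposition~\ref{prop: structure group}~(b), and then assemble these pieces in $G$ using parts (c) and (d). Fix $l\geq 1$; in $G^{\leq l}$ only finitely many dilogarithm factors $\Psi[{\bf n}]^{\delta({\bf n})s_{\bf n}}$ with $\deg({\bf n})\leq l$ survive, so $\pi_{l}(P)$ is a finite product. The core is a bubble-sort style algorithm performed inside $G^{\leq l}$: scan adjacent pairs $\Psi[{\bf n}_\lambda]^{\alpha_\lambda}\Psi[{\bf n}_{\lambda'}]^{\alpha_{\lambda'}}$; if $\{{\bf n}_\lambda,{\bf n}_{\lambda'}\}=0$ and $\deg({\bf n}_\lambda)>\deg({\bf n}_{\lambda'})$, commute via Proposition~\ref{prop: fundamental relation}~(a); if $\{{\bf n}_\lambda,{\bf n}_{\lambda'}\}<0$, apply (iterated) pentagon relations from Proposition~\ref{prop: fundamental relation}~(b) to replace the disordered pair by a string whose new vectors take the form $a{\bf n}_\lambda+b{\bf n}_{\lambda'}$ with $a,b\in\mathbb{Z}_{\geq 0}$. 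Since these new vectors satisfy $\deg(a{\bf n}_\lambda+b{\bf n}_{\lambda'})\leq \deg({\bf n}_\lambda)+\deg({\bf n}_{\lambda'})$, and since factors of degree exceeding $l$ are killed in $G^{\leq l}$, termination follows from a lexicographic monovariant counting inversions among surviving degrees.

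The step I expect to be the main obstacle is verifying that every exponent produced stays of the form $\delta({\bf n})s'_{\bf n}$ with $s'_{\bf n}\in\mathbb{Z}_{\geq 0}$. Commutation preserves exponents trivially. For a pentagon step applied to $\Psi[{\bf n}']^{\delta({\bf n}')q}\Psi[{\bf n}]^{\delta({\bf n})p}$ with $t=\{{\bf n}',{\bf n}\}\neq 0$, the formula $\delta({\bf n})=bc/\gcd(n_1b,n_2c)$ yields that $\delta({\bf n})n_i$ and $\delta({\bf n}')n_i'$ are all integers, whence $\delta({\bf n})t,\delta({\bf n}')t\in\mathbb{Z}$. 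This lets us rewrite the pair as $(\Psi[{\bf n}']^{1/t})^{\delta({\bf n}')qt}(\Psi[{\bf n}]^{1/t})^{\delta({\bf n})pt}$ and unroll the pentagon into an ordered product in which each factor $\Psi[a{\bf n}+a'{\bf n}']$ carries a $\mathbb{Z}_{\geq 0}$-exponent. The delicate point is checking that these exponents are always divisible by $\delta(a{\bf n}+a'{\bf n}')$; one has to combine the integrality of $\delta(\cdot)t$ on the source with the analogous divisibility $\delta(a{\bf n}+a'{\bf n}')\cdot(a n_i+a' n_i') \in\mathbb{Z}$ on the target, together with the explicit Reineke-type binomial coefficients that arise from iterating the pentagon.

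For uniqueness, observe that the ordered-product conditions, together with the tie-break by degree when $\{{\bf n}_\lambda,{\bf n}_{\lambda'}\}=0$, force a canonical position for each ${\bf n}$; working modulo $G^{>\deg({\bf n})}$ and inducting on $\deg({\bf n})$ pins down the exponent $s'_{\bf n}$ uniquely at every level. Because the ordered products constructed at different $l$ are compatible under the projections $G^{\leq l+1}\to G^{\leq l}$ (the algorithm at level $l+1$ refines the one at level $l$), Proposition~\ref{prop: structure group}~(c) guarantees that the infinite ordered product $\orderedprod_{{\bf n}\in N^{+}}\Psi[{\bf n}]^{\delta({\bf n})s'_{\bf n}}$ is well defined in $G$, and (d) promotes level-wise equality with $P$ to equality in $G$. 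The final clause of the proposition — that $P$ can actually be rearranged into this product using only the two families of relations — is built into the algorithm, since every step executes one instance of (a) or (b).
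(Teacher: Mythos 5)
The paper does not actually prove this proposition: it is imported as a black box from \cite[Prop.~III.5.4]{Nak23}, so there is no internal proof to compare against. Judged on its own, your sketch correctly identifies the standard strategy (order level-by-level in the quotients $G^{\leq l}$ using the commutation and pentagon relations, then assemble via Proposition~\ref{prop: structure group}~(c),(d)), but it leaves unproved the two points that constitute the actual content of the lemma. The first is the exponent bookkeeping. The pentagon relation of Proposition~\ref{prop: fundamental relation}~(b) is stated only for the specific exponent $1/s$ with $s=\{{\bf n}',{\bf n}\}$ on both factors, so to apply it to $\Psi[{\bf n}']^{\delta({\bf n}')q}\Psi[{\bf n}]^{\delta({\bf n})p}$ you must unroll into unit moves and then track the exponent that each newly created vector ${\bf m}=a{\bf n}+a'{\bf n}'$ accumulates over the (possibly infinite) iteration. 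The assertion that this accumulated exponent always lies in $\delta({\bf m})\mathbb{Z}_{\geq 0}$ --- both the divisibility by $\delta({\bf m})$ and the nonnegativity, which is exactly the claim $s'_{\bf n}\in\mathbb{Z}_{\geq 0}$ --- is the statement you label ``the delicate point'' and then do not check; invoking ``Reineke-type binomial coefficients'' names the answer without deriving it. There is also a sign issue you pass over: for a disordered adjacent pair your $t=\{{\bf n}',{\bf n}\}$ is negative, so $\delta({\bf n})pt$ is a negative integer and your unrolling expresses the factor as a negative power of $\Psi[{\bf n}]^{1/t}$; the pentagon must then be applied to inverses, which requires a further argument or a reformulation of the move.

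The second gap is termination at each level $l$. A pentagon move replaces two factors by three, inserting a new vector of degree $\deg{\bf n}+\deg{\bf n}'$ between its parents in slope, and the number of inversions among the surviving factors of degree $\leq l$ can increase after a single move; the ``lexicographic monovariant counting inversions'' is asserted but never exhibited, and it is not clear it exists in the form you state. The usual repair is a genuine induction on degree: assuming the product is ordered modulo $G^{>l-1}$, one shows finitely many further moves order it modulo $G^{>l}$, using that every vector created by a pentagon move has degree strictly larger than each of its two parents, so the moves needed at stage $l$ do not disturb the already-ordered lower-degree part. Your uniqueness and assembly arguments are essentially sound in outline (though uniqueness tacitly uses that the classes of $\Psi[{\bf n}]$ with $\deg{\bf n}=l$ are independent in the abelian quotient $G^{>l-1}/G^{>l}$, which deserves a word), but without the exponent computation and the termination argument the proof is a plan rather than a proof.
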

Based on dilogatithm elements, we define scattering diagrams.
\begin{dfn}
Let
\begin{equation}
N^{+}_{\mathrm{pr}}=\{(n_1,n_2) \in N^{+} \mid \gcd(n_1,n_2)=1\}.
\end{equation}
For each ${\bf n}_0 \in N^{+}_{\mathrm{pr}}$, we define $G_{{\bf n}_0}^{||}$ as the subgroup of $G$ generated by $\{\Psi[k{\bf n}_0]^{s} \mid k=1,2,\dots, s \in \mathbb{Q}\}$, and we call it the {\em parallel subgroup} of ${\bf n}_0$.
\end{dfn}
Note that every parallel subgroup is a commutative group because of Proposition~\ref{prop: fundamental relation}~(a).
\begin{dfn}\label{dfn: wall}
We call the following triplet $w=(\mathfrak{d},g)_{{\bf n}_0}$ a {\em wall}.
\begin{itemize}
\item ${\bf n}_0 \in N^{+}_{\mathrm{pr}}$.
\item $g \in G_{{\bf n}_0}^{||}$.
\item $\mathfrak{d}=\mathbb{R}_{\geq 0}{\bf m} \subset M_{\mathbb{R}}$ for some ${\bf m} \in M_{\mathbb{R}}\setminus\{{\bf 0}\}$ such that $\langle {\bf m}, {\bf n}_0\rangle=0$.
\end{itemize}
We call ${\bf n}_0$ as a {\em normal vector} of this wall. Since we may recover ${\bf n}_0$ from $\mathfrak{d}$, we sometimes omit ${\bf n}_0$.
\end{dfn}
\begin{dfn}[Scattering diagram]
A set of walls $\mathfrak{D}=\{w_{\lambda}=(\mathfrak{d}_{\lambda},g_{\lambda})_{{\bf n}_{\lambda}} \mid \lambda \in \Lambda\}$ is called a {\em scattering diagram} if $\#\{\lambda \in \Lambda \mid \pi_{l}(g_{\lambda}) \neq \mathrm{id}\} < \infty$ holds for any $l \in \mathbb{Z}_{>0}$.
We define its {\em support} $|\mathfrak{D}|=\bigcup_{\lambda} \mathfrak{d}_{\lambda} \subset M_{\mathbb{R}}$.
\end{dfn}
\begin{dfn}[Path ordered product]
Let $\mathfrak{D}$ be a scattering diagram. A continuous path $\gamma:[0,1] \to M_{\mathbb{R}}$ is said to be {\em admissible} if it satisfies the following:
\begin{itemize}
\item $\gamma(0),\gamma(1) \notin |\mathfrak{D}|$.
\item $\gamma:(0,1) \to M_{\mathbb{R}}$ is differentiable. 
\item If $\gamma$ and $|\mathfrak{D}|$ has an intersection, it is not the origin and $\gamma$ intersects $|\mathfrak{D}|$ transversely. Moreover, for each intersection $\gamma(t)$ at the intersection time $t$, suppose that it intersects to a wall $(\mathfrak{d},g)_{{\bf n}_0}$. Then, we define its {\em intersection sign} $\epsilon_{t}$ as follows:
\begin{equation}
\epsilon_{t}=\begin{cases}
1 & \langle \gamma'(t),{\bf n}_0 \rangle<0,\\
-1 & \langle \gamma'(t),{\bf n}_0 \rangle>0.
\end{cases}
\end{equation}
\end{itemize}
Let $\gamma$ be an admissible path, and let $\{t_{\lambda}\}_{\lambda \in \Lambda}$ be the collection of intersection time labeled by an ordered set $\Lambda$ such that $t_{\lambda} \leq t_{\lambda'}$ whenever $\lambda < \lambda'$. (If $\gamma$ intersects some walls at the same time, we put the same time in this collection.) Then, the {\em path ordered product} $\mathfrak{p}_{\gamma,\mathfrak{D}}$ is defined by
\begin{equation}
\mathfrak{p}_{\gamma,\mathfrak{D}}=\prod_{\lambda \in \Lambda} g_{\lambda}^{\epsilon_{t_\lambda}}.
\end{equation}
\end{dfn}
\begin{dfn}
A scattering diagram $\mathfrak{D}$ is said to be {\em consistent} if the path ordered products $\mathfrak{p}_{\gamma,\mathfrak{D}}$ are determined by its starting point $\gamma(0)$ and its ending point $\gamma(1)$ for any admissible path $\gamma$.
\end{dfn}
\begin{dfn}
Two scattering diagrams $\mathfrak{D}, \mathfrak{D}'$ are said to be {\em equivalent} if two path ordered product $\mathfrak{p}_{\gamma,\mathfrak{D}}, \mathfrak{p}_{\gamma,\mathfrak{D}'}$ are the same for any admissible path $\gamma$ for both $\mathfrak{D}$ and $\mathfrak{D}'$.
\end{dfn}

\begin{thm}\label{thm: scattering diagrams for dilogarithm identities}
For each ${\bf n} \in N^{+}$, a scattering diagram
\begin{equation}
\mathfrak{D}_{b,c}=\{w_{\bf n}=(\mathfrak{d}_{\bf n},g_{\bf n}) \mid {\bf n} \in N^{+}\} \cup \{(\mathbb{R}(-{\bf e}_2),\Psi[{\bf e}_1]^{c}),(\mathbb{R}{\bf e}_1,\Psi[{\bf e}_2]^{b})\}.
\end{equation}
which satisfies the following conditions exists uniquely.
\begin{itemize}
\item For each ${\bf n} \in N^{+}$, $g_{\bf n}=\Psi[{\bf n}]^{s_{\bf n}}$ for some $s_{\bf n} \in \mathbb{Q}$.
\item For each ${\bf n}=(n_1,n_2) \in N^{+}$, $\mathfrak{d}_{\bf n}=\mathbb{R}_{\geq 0}{\bf m}$ with ${\bf m}=(-cn_2,bn_1)$.
\end{itemize}
\end{thm}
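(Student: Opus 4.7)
The plan is to reduce both existence and uniqueness to a single application of the Ordering Lemma (Proposition~\ref{prop: ordering lemmma}). The key observation is that consistency along a small counterclockwise loop encircling the origin is equivalent to one identity in $G$, namely
\begin{equation}
\Psi[{\bf e}_2]^{b}\,\Psi[{\bf e}_1]^{c} \;=\; \orderedprod_{{\bf n} \in N^+} g_{\bf n},
\end{equation}
where the order on $N^+$ is the angular order of the rays $\mathfrak{d}_{\bf n}$. One first checks that this angular order coincides with the algebraic order defined by $\{\,,\,\}$: since $\{{\bf n},{\bf n}'\} = n_1 n_2' - n_2 n_1'$, its sign detects the relation between the slopes $n_1/n_2$ and $n_1'/n_2'$, and these slopes in turn parametrize the direction of $\mathfrak{d}_{\bf n}=\mathbb{R}_{\geq 0}(-cn_2,bn_1)$ as it sweeps from $\mathbb{R}_{\geq 0}{\bf e}_2$ (at ${\bf n}={\bf e}_1$) down through the second quadrant to $\mathbb{R}_{\geq 0}(-{\bf e}_1)$ (at ${\bf n}={\bf e}_2$).

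\textbf{Step 1 (Existence of the exponents).} Since $c=\delta({\bf e}_1)$ and $b=\delta({\bf e}_2)$, the product $\Psi[{\bf e}_2]^{b}\Psi[{\bf e}_1]^{c}$ is of exactly the form required by Proposition~\ref{prop: ordering lemmma}. The lemma produces a unique rearrangement
\begin{equation}
\Psi[{\bf e}_2]^{b}\,\Psi[{\bf e}_1]^{c} = \orderedprod_{{\bf n} \in N^+} \Psi[{\bf n}]^{\delta({\bf n})\,s'_{\bf n}}, \qquad s'_{\bf n} \in \mathbb{Z}_{\geq 0}.
\end{equation}
I would define $s_{\bf n}:=\delta({\bf n})s'_{\bf n}$, $g_{\bf n}:=\Psi[{\bf n}]^{s_{\bf n}}$, and $\mathfrak{d}_{\bf n}:=\mathbb{R}_{\geq 0}(-cn_2,bn_1)$. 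The required finiteness $\#\{{\bf n}\mid\pi_l(g_{\bf n})\neq\mathrm{id}\}<\infty$ is automatic because only finitely many ${\bf n}\in N^+$ satisfy $\deg({\bf n})\leq l$.

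\textbf{Step 2 (Consistency).} Since $M_{\mathbb{R}}\setminus\{0\}$ has $\pi_1=\mathbb{Z}$, it suffices to verify $\mathfrak{p}_{\gamma,\mathfrak{D}_{b,c}}=\mathrm{id}$ for one small counterclockwise circle $\gamma$ around the origin. I would compute each intersection sign directly from the definition $\epsilon_t=-\mathrm{sign}\,\langle\gamma'(t),{\bf n}_0\rangle$: at the walls $w_{\bf n}$ in the second quadrant, at $w_{{\bf e}_1}$, and at $w_{{\bf e}_2}$ one gets $\epsilon_t=+1$, contributing $g_{\bf n}$ in angular order; at the two initial walls on $\mathbb{R}{\bf e}_1$ and $\mathbb{R}(-{\bf e}_2)$ one gets $\epsilon_t=-1$, contributing $\Psi[{\bf e}_2]^{-b}$ and $\Psi[{\bf e}_1]^{-c}$ at the end of the loop. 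The condition $\mathfrak{p}_{\gamma,\mathfrak{D}_{b,c}}=\mathrm{id}$ then reads precisely as the identity produced in Step~1.

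\textbf{Step 3 (Uniqueness).} Any scattering diagram $\mathfrak{D}'_{b,c}$ of the prescribed shape has the same supports $\mathfrak{d}_{\bf n}$ and the same initial walls, so the same loop computation yields the same identity with the same ordered factorization. The uniqueness clause of Proposition~\ref{prop: ordering lemmma} then forces the exponents $s_{\bf n}$ to coincide. The main obstacle is Step~2: carefully bookkeeping the intersection signs and confirming that the geometric order in which $\gamma$ meets the walls really is the algebraic order used in the Ordering Lemma. Once that dictionary is established, the theorem follows immediately from Proposition~\ref{prop: ordering lemmma}.
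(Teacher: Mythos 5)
Your proposal is correct and follows essentially the same route the paper takes: the paper states this theorem as a known result and, immediately afterward, realizes $\mathfrak{D}_{b,c}$ by exactly your construction, namely applying the Ordering Lemma (Proposition~\ref{prop: ordering lemmma}) to $\Psi[{\bf e}_2]^{b}\Psi[{\bf e}_1]^{c}$ to produce the exponents $u_{\bf n}(c,b)$, with consistency reduced to the single loop around the origin. Your sign and ordering bookkeeping in Steps 2--3 is the standard dictionary between the angular order of the rays $\mathbb{R}_{\geq 0}(-cn_2,bn_1)$ and the order imposed by $\{\,,\,\}$, and it checks out.
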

\begin{dfn}
The scattering diagram equivalent to the one $\mathfrak{D}_{b,c}$ in Theorem~\ref{thm: scattering diagrams for dilogarithm identities} is called the {\em cluster scattering diagram}.
\end{dfn}
In rank 2 case, the construction of cluster scattering diagram is closely related to Proposition~\ref{prop: ordering lemmma}. Let $u_{\bf n}(c,b)$ be the rational number satisfying the following equality:
\begin{equation}
\Psi[{\bf e}_{2}]^{b}\Psi[{\bf e}_1]^c=\orderedprod_{{\bf n} \in N^{+}}\Psi[{\bf n}]^{u_{\bf n}(c,b)}.
\end{equation}
(By Proposition~\ref{prop: ordering lemmma}, this is uniquely determined and $u_{\bf n}(c,b)$ satisfies $u_{\bf n}(c,b) \in \delta({\bf n})\mathbb{Z}_{\geq 0}$.)
Then, the cluster scattering diagram $\mathfrak{D}_{b,c}$ may be obtained by
\begin{equation}\label{eq: cluster scattering diagram via dilogarithm elements}
\{(\mathfrak{d}_{\bf n},\Psi[{\bf n}]^{u_{\bf n}(c,b)}) \mid {\bf n} \in N^{+}\} \cup \{(\mathbb{R}(-{\bf e}_2),\Psi[{\bf e}_1]^{c}),(\mathbb{R}{\bf e}_1,\Psi[{\bf e}_2]^{b})\}.
\end{equation}
In this expression, we have already known some properties in \cite{Aka25}. To state them, we introduce the {\em binomial coefficients} $\binom{x}{k}$ ($x \in \mathbb{Q}$, $k \in \mathbb{Z}_{\geq 0}$) as
\begin{equation}
\binom{x}{k}=\frac{x(x-1)\cdots(x-(k-1))}{k!}.
\end{equation}
When we view $x$ as an indeterminate, it is a polynomial with $\deg_{x}\binom{x}{k}=k$.
\begin{prop}[{\cite[Thm.~7.1, Prop.~8.6]{Aka25}}]\label{prop: exponent of dilogarithm identity}
Here, we view $b,c$ as indeterminates.
For each ${\bf n}=(n_1,n_2) \in N^{+}$, $u_{\bf n}(c,b)$ is a polynomial in $b$ and $c$. Moreover, it may be expressed as
\begin{equation}
u_{\bf n}(c,b)=\frac{1}{\gcd(n_1,n_2)}\sum_{\substack{1 \leq i \leq n_1,\\
1 \leq j \leq n_2}} \alpha_{\bf n}(i,j) \binom{c}{i}\binom{b}{j}
\end{equation}
for some nonnegative integers $\alpha_{\bf n}(i,j) \in \mathbb{Z}_{\geq 0}$ independent of $b,c$.
\par
In particular, (since $\alpha_{\bf n}(n_1,n_2) \neq 0$) this polynomial $u_{\bf n}(c,b)$ has the maximum degree $\deg_{(c,b)} u_{\bf n}(c,b)=(n_1,n_2)$.
\end{prop}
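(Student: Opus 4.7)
My plan is to prove the three assertions---polynomiality, the non-negative integer expansion in the basis $\binom{c}{i}\binom{b}{j}$, and the sharp bidegree $(n_1,n_2)$---by induction on $\deg({\bf n}) = n_1 + n_2$. By Proposition~\ref{prop: structure group}(b) I may work modulo $G^{>L}$, so that all $u_{\bf n}(c,b)$ with $\deg({\bf n}) \le L$ are determined by finitely many pentagon moves bringing $\Psi[{\bf e}_2]^b\Psi[{\bf e}_1]^c$ into the ordered form guaranteed by Proposition~\ref{prop: ordering lemmma}. The base case $\deg({\bf n})=1$ is trivial: $u_{{\bf e}_1}(c,b)=c$ and $u_{{\bf e}_2}(c,b)=b$, both fitting the claimed form.

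\textbf{Polynomiality and degree bound.}
For the inductive step I would pass to the completed Lie algebra and take logarithms on both sides of
\begin{equation*}
\Psi[{\bf e}_2]^b \Psi[{\bf e}_1]^c = \orderedprod_{{\bf n} \in N^+}\Psi[{\bf n}]^{u_{\bf n}(c,b)}.
\end{equation*}
The Baker--Campbell--Hausdorff expansion of the left side is a formal sum of iterated commutators of $\log\Psi[{\bf e}_2]$ and $\log\Psi[{\bf e}_1]$ weighted by monomials $b^p c^q$, where $p,q$ count the respective occurrences. Since each $\log\Psi[{\bf e}_i]=\sum_{j\ge 1}\frac{(-1)^{j+1}}{j^2}X_{j{\bf e}_i}$ contributes only factors $X_{j{\bf e}_i}$ with $j\ge 1$, an iterated commutator producing $X_{\bf n}$ has its ${\bf e}_1$-multiplicities summing to $n_1$ and its ${\bf e}_2$-multiplicities summing to $n_2$, forcing $q\le n_1$ and $p\le n_2$. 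Comparing with the BCH expansion of the right side---whose $X_{\bf n}$-coefficient is $u_{\bf n}(c,b)$ plus contributions $\frac{(-1)^{k+1}}{k^2}u_{{\bf n}/k}(c,b)$ for divisors $k$ of $\gcd(n_1,n_2)$ and commutator corrections built from the lower-degree $u_{{\bf n}'}$---yields a recursion expressing $u_{\bf n}(c,b)$ as a rational linear combination of polynomials of bidegree $\le(n_1,n_2)$. Inverting the resulting triangular system, which involves the coefficients $\frac{1}{k^2}$ appearing in the expansion of $\log\Psi[k{\bf n}_0]$, accounts for the denominator $\gcd(n_1,n_2)$ in the claimed formula.

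\textbf{Binomial expansion, positivity, and sharp degree (main obstacle).}
Once polynomiality of bidegree $\le(n_1,n_2)$ is established, the expansion $\gcd(n_1,n_2)\cdot u_{\bf n}(c,b)=\sum_{i,j}\alpha_{\bf n}(i,j)\binom{c}{i}\binom{b}{j}$ is unique with $\alpha_{\bf n}(i,j)\in\mathbb{Q}$, and the vanishing of $\alpha_{\bf n}(0,j)$ and $\alpha_{\bf n}(i,0)$ for non-axis ${\bf n}$ follows from the boundary values $u_{\bf n}(0,b)=u_{\bf n}(c,0)=0$. The difficulty lies in integrality and non-negativity of $\alpha_{\bf n}(i,j)$: the ordering lemma only gives $u_{\bf n}(c,b)\in\delta({\bf n})\mathbb{Z}_{\ge 0}$ at integer points, which does not by itself control the finite-difference coefficients. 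I would tackle this by giving the ordering algorithm a combinatorial interpretation: view $\Psi[{\bf e}_2]^b$ and $\Psi[{\bf e}_1]^c$ as products of $b$ and $c$ formally distinguishable atoms, regard each pentagon move as the merger of two atom-multisets, and identify $\alpha_{\bf n}(i,j)/\gcd(n_1,n_2)$ with the number of merge histories using exactly $i$ ${\bf e}_1$-atoms and $j$ ${\bf e}_2$-atoms to build a factor of direction ${\bf n}$; then $\binom{c}{i}\binom{b}{j}$ counts choices of atoms from the pool. Achieving this combinatorialization is the main obstacle I foresee, since the pentagon relation has to be deployed with negative exponents $1/\{{\bf e}_2,{\bf e}_1\}=-1$ and the apparent cancellations must be resolved into a positivity-preserving atomic algorithm. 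Given such an interpretation, the remaining claim $\alpha_{\bf n}(n_1,n_2)\ne 0$ follows by exhibiting one explicit fully saturated history that uses all $n_1$ ${\bf e}_1$-atoms and all $n_2$ ${\bf e}_2$-atoms, yielding the sharp bidegree $\deg_{(c,b)}u_{\bf n}=(n_1,n_2)$.
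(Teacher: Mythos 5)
The paper does not prove this proposition: it is imported verbatim from \cite[Thm.~7.1, Prop.~8.6]{Aka23}, and no argument for it appears in the present text. So the relevant question is whether your proposal would stand on its own, and as written it does not. It is a plan with two unfilled steps, one of which you yourself flag as the main obstacle. First, the Baker--Campbell--Hausdorff step is only asserted: you claim that comparing $X_{\bf n}$-coefficients yields a triangular recursion whose inversion ``accounts for the denominator $\gcd(n_1,n_2)$,'' but the BCH expansion introduces denominators both from the $\frac{(-1)^{j+1}}{j^2}$ in $\log\Psi[{\bf n}]$ and from the rational commutator coefficients, and you give no argument that these conspire to leave exactly a factor $\frac{1}{\gcd(n_1,n_2)}$ in front of an integral expression, nor that the bidegree bound survives the inversion. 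At best this route gives polynomiality with rational coefficients and the upper bound $\deg_{(c,b)}u_{\bf n}\le(n_1,n_2)$.

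Second, and decisively, the assertions that carry all the content of the proposition --- $\alpha_{\bf n}(i,j)\in\mathbb{Z}_{\ge 0}$ and $\alpha_{\bf n}(n_1,n_2)\neq 0$ (hence the \emph{sharp} bidegree) --- rest entirely on a combinatorial ``atomic merge history'' interpretation of the pentagon algorithm that you propose but do not construct. The BCH/logarithm framework cannot supply positivity, since its coefficients alternate in sign, and the ordering lemma (Proposition~\ref{prop: ordering lemmma}) only gives $u_{\bf n}(c,b)\in\delta({\bf n})\mathbb{Z}_{\ge 0}$ at integer points, which, as you correctly note, does not control the finite-difference coefficients $\alpha_{\bf n}(i,j)$. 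Resolving the sign issues in the pentagon moves into a manifestly positive counting scheme is precisely the hard technical core of \cite{Aka23}; acknowledging it as ``the main obstacle I foresee'' leaves the proof incomplete at exactly the point where the cited result is nontrivial.
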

\subsection{Formal power series expression}\label{sec: formal power series}
Let $x_1,x_2$ be indeterminates, and set $\hat{y}_1=x_2^{-b}$ and $\hat{y}_2=x_1^{c}$. Let $\mathbb{Q}[[x_1,x_2^{-1}]]$ (resp. $\mathbb{Q}[[\hat{y}_1,\hat{y}_2]]$) be the ring of formal power series with two variables $x_1,x_2^{-1}$ (resp. $\hat{y}_1,\hat{y}_2$). For each ${\bf m}=(m_1,m_2) \in M^{\circ}$ and ${\bf n}=(n_1,n_2) \in N^{+}$, we write ${\bf x}^{\bf m}=x_1^{m_1}x_2^{m_2}$ and $\hat{\bf y}^{\bf n}=\hat{y}_1^{n_1}\hat{y}_{2}^{n_2}$. Let $\mathbb{Q}[[\hat{\bf y}^{\bf n}]] \subset \mathbb{Q}[[\hat{y}_1,\hat{y}_2]]$ be the ring of formal power series with one variable $\hat{\bf y}^{\bf n}$.
\par
We introduce the following group associated with a formal power series.
\begin{dfn}
Let ${\bf n}=(n_1,n_2) \in N^{+}_{\mathrm{pr}}$. For any $f \in \mathbb{Q}[[\hat{\bf y}^{\bf n}]]\backslash\{0\}$, let $\mathfrak{p}_{f} \in \mathrm{Aut}(\mathbb{Q}[[x_1,x_2^{-1}]])$ be a $\mathbb{Q}$-algebra automorphism induced by
\begin{equation}
\mathfrak{p}_{f}({\bf x}^{\bf m})={\bf x}^{\bf m}\cdot f^{\langle{\bf m,}\delta({\bf n}){\bf n}\rangle} \quad ({\bf m} \in M^{\circ} \cap \mathbb{Q}[[x_1,x_2^{-1}]]).
\end{equation}
Let $\tilde{G}$ be a subgroup of $\mathrm{Aut}(\mathbb{Q}[[x_1,x_2^{-1}]])$ generated by $\{\mathfrak{p}_{f}\mid f \in \mathbb{Q}[[\hat{y}_1,\hat{y}_2]]\setminus\{0\}\}$.
\end{dfn}
The correspondence $f \mapsto \mathfrak{p}_{f}$ is injective. Moreover, by the following fact, we may identify the element of structure group $G$ with the formal power series.
\begin{lem}[{\cite[(III.4.22)]{Nak23}}]\label{lem: group homomorphism}
There exists an injective group homomorphism $\rho : G \to \tilde{G}$ satisfying
\begin{equation}
\rho(\Psi[{\bf n}]^{\delta({\bf n})s})=\mathfrak{p}_{(1+\hat{\bf y}^{\bf n})^{s}} \quad ({\bf n} \in N^{+}, s \in \mathbb{Q}).
\end{equation}
Moreover, for each ${\bf n} \in N^{+}_{\mathrm{pr}}$ and $f,f' \in \mathbb{Q}[[\hat{\bf y}^{\bf n}]]$, we have
\begin{equation}
\mathfrak{p}_{f}\mathfrak{p}_{f'}=\mathfrak{p}_{ff'}.
\end{equation}
In particular, for any $g \in G_{\bf n}^{||}$, there exists a unique $f \in \mathbb{Q}[[\hat{\bf y}^{\bf n}]]$ such that $\rho(g)=\mathfrak{p}_{f}$.
\end{lem}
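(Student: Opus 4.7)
The plan is to define $\rho$ on the generators of $G$ given by Proposition~\ref{prop: structure group}(a), namely $\rho(\Psi[{\bf n}]^{s}) := \mathfrak{p}_{(1+\hat{\bf y}^{\bf n})^{s/\delta({\bf n})}}$ for ${\bf n} \in N^{+}$ and $s \in \mathbb{Q}$, verify that this assignment is compatible with the two fundamental relations of Proposition~\ref{prop: fundamental relation}, and then promote it to a homomorphism on all of $G$ using the truncation principle of Proposition~\ref{prop: structure group}(c)--(d).

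First I would establish the multiplication rule $\mathfrak{p}_{f}\mathfrak{p}_{f'}=\mathfrak{p}_{ff'}$ for $f,f'\in\mathbb{Q}[[\hat{\bf y}^{\bf n}]]\setminus\{0\}$ with ${\bf n}\in N^{+}_{\mathrm{pr}}$, which is also the content of the ``moreover'' sentence of the lemma. The key observation is that $\hat{\bf y}^{\bf n}={\bf x}^{(cn_{2},-bn_{1})}$, and a direct computation with the matrix $D$ gives
\begin{equation}
\langle(cn_{2},-bn_{1})^{\top},\delta({\bf n}){\bf n}\rangle=n_{1}n_{2}-n_{1}n_{2}=0,
\end{equation}
so every $\mathfrak{p}_{f}$ with $f\in\mathbb{Q}[[\hat{\bf y}^{\bf n}]]$ fixes $\hat{\bf y}^{\bf n}$, and therefore fixes $f'$. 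Applying $\mathfrak{p}_{f}\mathfrak{p}_{f'}$ to a monomial ${\bf x}^{\bf m}$ then yields ${\bf x}^{\bf m}\cdot (ff')^{\langle{\bf m},\delta({\bf n}){\bf n}\rangle}$, which is precisely $\mathfrak{p}_{ff'}({\bf x}^{\bf m})$.

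Next I would verify compatibility with Proposition~\ref{prop: fundamental relation}. The commutation relation (a) is immediate in rank $2$: $\{{\bf n}',{\bf n}\}=0$ forces ${\bf n}'$ and ${\bf n}$ to be parallel to a common primitive vector ${\bf n}_{0}$, so both images lie in the commutative set $\{\mathfrak{p}_{f}\mid f\in\mathbb{Q}[[\hat{\bf y}^{{\bf n}_{0}}]]\}$ by the multiplication rule just established. The pentagon relation (b) is the main obstacle: after applying $\rho$, it becomes the identity
\begin{equation}
\mathfrak{p}_{(1+\hat{\bf y}^{{\bf n}'})^{\frac{1}{s\delta({\bf n}')}}}\mathfrak{p}_{(1+\hat{\bf y}^{\bf n})^{\frac{1}{s\delta({\bf n})}}}=\mathfrak{p}_{(1+\hat{\bf y}^{\bf n})^{\frac{1}{s\delta({\bf n})}}}\mathfrak{p}_{(1+\hat{\bf y}^{{\bf n}+{\bf n}'})^{\frac{1}{s\delta({\bf n}+{\bf n}')}}}\mathfrak{p}_{(1+\hat{\bf y}^{{\bf n}'})^{\frac{1}{s\delta({\bf n}')}}}
\end{equation}
with $s=\{{\bf n}',{\bf n}\}$, which I would check by computing both sides on a generic monomial ${\bf x}^{\bf m}$ and comparing the exponents of the three factors $1+\hat{\bf y}^{\bf n}$, $1+\hat{\bf y}^{{\bf n}+{\bf n}'}$, $1+\hat{\bf y}^{{\bf n}'}$. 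The delicate point is that $\delta({\bf n}+{\bf n}')$ is not additive in ${\bf n},{\bf n}'$, but the factors of $\delta$ cancel cleanly because $\langle{\bf m},\delta({\bf n}){\bf n}\rangle$ is always an integer by the definition of $N^{\circ}$.

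Finally, for the well-definedness of $\rho$ on all of $G$, and in particular on its infinite products, I would apply Proposition~\ref{prop: structure group}(c)--(d): project each relation down to $G^{\leq l}$, where only finitely many factors contribute and the previous verifications suffice, and pass to the inverse limit. Injectivity is then obtained by contradiction: given $g\neq\mathrm{id}$, write it as an ordered product (Proposition~\ref{prop: ordering lemmma} with rational exponents), pick the generator $\Psi[{\bf n}]^{s_{\bf n}}$ with smallest-degree ${\bf n}$ such that $s_{\bf n}\neq 0$, and detect it by evaluating $\rho(g)({\bf x}^{\bf m})$ modulo terms of higher degree for a suitable ${\bf m}$ with $\langle{\bf m},{\bf n}\rangle\neq 0$. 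The last sentence of the lemma is then an immediate consequence: existence of $f\in\mathbb{Q}[[\hat{\bf y}^{\bf n}]]$ with $\rho(g)=\mathfrak{p}_{f}$ for $g\in G_{\bf n}^{||}$ follows from the multiplication rule applied to the product expansion of $g$, and uniqueness from the already-noted injectivity of $f\mapsto\mathfrak{p}_{f}$.
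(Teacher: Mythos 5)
The paper itself offers no proof of this lemma: it is imported verbatim from \cite[(III.4.22)]{Nak23}, where $\rho$ is built at the Lie-algebra level --- one sends $X_{\bf n}$ to an explicit derivation of $\mathbb{Q}[[x_1,x_2^{-1}]]$, checks compatibility with the bracket, and exponentiates; the formula $\rho(\Psi[{\bf n}]^{\delta({\bf n})s})=\mathfrak{p}_{(1+\hat{\bf y}^{\bf n})^{s}}$ then falls out of $\sum_{j\geq 1}\tfrac{(-1)^{j+1}}{j}z^{j}=\log(1+z)$. Within your proposal, the parts that do work are the ``moreover'' clause and the final sentence: the identities $\hat{\bf y}^{\bf n}={\bf x}^{(cn_{2},-bn_{1})}$ and $\langle(cn_{2},-bn_{1})^{\top},\delta({\bf n}){\bf n}\rangle=0$ are correct, they give $\mathfrak{p}_{f}\mathfrak{p}_{f'}=\mathfrak{p}_{ff'}$, and existence and uniqueness of $f$ with $\rho(g)=\mathfrak{p}_{f}$ for $g\in G_{\bf n}^{||}$ follow as you say.

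The genuine gap is in the well-definedness of $\rho$ on $G$. You define $\rho$ on the generators $\Psi[{\bf n}]^{s}$ and verify compatibility only with the two relations of Proposition~\ref{prop: fundamental relation}. But the paper states explicitly, immediately after that proposition, that these two relations are \emph{not} a fundamental (complete) set of relations of $G$; they merely suffice to reorder the particular products covered by Proposition~\ref{prop: ordering lemmma}. A generators-and-relations argument therefore does not show that every word in the $\Psi[{\bf n}]^{s}$ representing $\mathrm{id}$ in $G$ is sent to $\mathrm{id}$ in $\tilde{G}$, and Proposition~\ref{prop: structure group}(c)--(d) cannot repair this: those items govern convergence and equality of products already known to lie in $G$, not the factorization of a map through an unspecified presentation. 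Two secondary issues: the pentagon verification --- the one genuinely nontrivial computation on your route --- is only asserted (``check on a generic monomial''), not carried out; and your injectivity argument invokes Proposition~\ref{prop: ordering lemmma} for arbitrary rational exponents, whereas it is stated only for finite products with exponents in $\delta({\bf n})\mathbb{Z}_{>0}$. The uniform fix for all of these is to work with the Lie-algebra construction of $G$ that the paper deliberately suppresses, i.e.\ to define $\rho$ as the exponential of a representation by derivations rather than by a presentation.
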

Based on this correspondence, we introduce another notion of walls.
\begin{dfn}\label{def: wall function}
For each wall $(\mathfrak{d},g)_{{\bf n}_0}$ in the sense of Definition~\ref{dfn: wall}, we also write $[\mathfrak{d},f]_{{\bf n}_0}=(\mathfrak{d},g)_{{\bf n}_0}$, where $f \in \mathbb{Q}[[\hat{\bf y}^{{\bf n}_0}]]$ is determined by $\rho(g)=\mathfrak{p}_{f}$. This $f$ is called a {\em wall function} of $(\mathfrak{d},g)_{{\bf n}_0}$.
\end{dfn}
Now, we may rephrase the cluster scattering diagram in Theorem~\ref{thm: scattering diagrams for dilogarithm identities} as follows:
\begin{prop}\label{prop: wall function}
Consider the cluster scattering diagram in (\ref{eq: cluster scattering diagram via dilogarithm elements}). Set $\hat{u}_{\bf n}(c,b)=\delta({\bf n})^{-1}u_{\bf n}(c,b)$ and, for any ${\bf n}_0 \in N^{+}_{\mathrm{pr}}$,
\begin{equation}\label{eq: wall function}
f_{{\bf n}_0}=\prod_{k=1}^{\infty}(1+\hat{\bf y}^{k{\bf n}_0})^{\hat{u}_{k{\bf n}_0}(c,b)}.
\end{equation}
Then, the following scattering diagram is equivalent to the cluster scattering diagram $\mathfrak{D}_{b,c}$.
\begin{equation}
\{[\mathfrak{d}_{{\bf n}_0},f_{{\bf n}_0}]_{{\bf n}_0} \mid {\bf n}_0 \in N^{+}_{\mathrm{pr}}\} \cup \{[\mathbb{R}(-{\bf e}_2),1+\hat{y}_1]_{{\bf e}_1},[\mathbb{R}{\bf e}_1,1+\hat{y}_2]_{{\bf e}_2}\}.
\end{equation}
\end{prop}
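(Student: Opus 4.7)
The plan is to obtain the reformulated diagram by collapsing the parallel walls in $(\ref{eq: cluster scattering diagram via dilogarithm elements})$ and then translating each consolidated group element into its wall function via the injective homomorphism $\rho$ of Lemma~\ref{lem: group homomorphism}. The key observation is that for a fixed ${\bf n}_0 \in N^{+}_{\mathrm{pr}}$, every wall $w_{k{\bf n}_0}$ ($k\geq 1$) has the same support $\mathfrak{d}_{k{\bf n}_0}=\mathbb{R}_{\geq 0}(-ckn_2,bkn_1)=\mathfrak{d}_{{\bf n}_0}$, and all of its associated group elements lie in the commutative parallel subgroup $G^{||}_{{\bf n}_0}$ by Proposition~\ref{prop: fundamental relation}~(a).

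First I would show that replacing, for each ${\bf n}_0\in N^{+}_{\mathrm{pr}}$, the family $\{w_{k{\bf n}_0}\}_{k\geq 1}$ by a single wall $(\mathfrak{d}_{{\bf n}_0},g_{{\bf n}_0})_{{\bf n}_0}$ with $g_{{\bf n}_0}:=\prod_{k\geq 1}\Psi[k{\bf n}_0]^{u_{k{\bf n}_0}(c,b)}$ produces an equivalent scattering diagram. Well-definedness of this infinite product follows from Proposition~\ref{prop: structure group}~(c), since $\pi_{l}(\Psi[k{\bf n}_0]^{u_{k{\bf n}_0}(c,b)})=\mathrm{id}$ as soon as $k\deg({\bf n}_0)>l$. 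Along any admissible path $\gamma$, an intersection with $\mathfrak{d}_{{\bf n}_0}$ in the original diagram occurs at a single time $t$, where every $w_{k{\bf n}_0}$ is crossed simultaneously with a common intersection sign $\epsilon_t$; the combined contribution $\prod_{k\geq 1} g_{k{\bf n}_0}^{\epsilon_t}=g_{{\bf n}_0}^{\epsilon_t}$ (order immaterial by commutativity) is exactly what the collapsed wall contributes, so the path-ordered products agree.

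Next I would translate to formal power series. Since $u_{k{\bf n}_0}(c,b)=\delta(k{\bf n}_0)\,\hat{u}_{k{\bf n}_0}(c,b)$ with $\hat{u}_{k{\bf n}_0}(c,b)\in\mathbb{Z}_{\geq 0}$ by Proposition~\ref{prop: ordering lemmma}, Lemma~\ref{lem: group homomorphism} gives $\rho(\Psi[k{\bf n}_0]^{u_{k{\bf n}_0}(c,b)})=\mathfrak{p}_{(1+\hat{\bf y}^{k{\bf n}_0})^{\hat{u}_{k{\bf n}_0}(c,b)}}$. The identity $\mathfrak{p}_{f}\mathfrak{p}_{f'}=\mathfrak{p}_{ff'}$ on the commutative image of $G^{||}_{{\bf n}_0}$, combined with the truncation criterion Proposition~\ref{prop: structure group}~(d) to pass from finite to infinite products, then yields $\rho(g_{{\bf n}_0})=\mathfrak{p}_{f_{{\bf n}_0}}$ with $f_{{\bf n}_0}$ as in $(\ref{eq: wall function})$; by Definition~\ref{def: wall function} the collapsed wall is exactly $[\mathfrak{d}_{{\bf n}_0},f_{{\bf n}_0}]_{{\bf n}_0}$. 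A direct computation of the normalized factors gives $\delta({\bf e}_1)=c$ and $\delta({\bf e}_2)=b$, so applying $\rho$ to $\Psi[{\bf e}_1]^{c}=\Psi[{\bf e}_1]^{\delta({\bf e}_1)\cdot 1}$ and $\Psi[{\bf e}_2]^{b}=\Psi[{\bf e}_2]^{\delta({\bf e}_2)\cdot 1}$ converts the two initial walls into $[\mathbb{R}(-{\bf e}_2),1+\hat{y}_1]_{{\bf e}_1}$ and $[\mathbb{R}{\bf e}_1,1+\hat{y}_2]_{{\bf e}_2}$.

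The main obstacle will be bookkeeping rather than conceptual: carefully justifying that the infinite product defining $g_{{\bf n}_0}$ converges in the prescribed sense and that $\rho$ is compatible with such infinite products. Both issues dissolve on passing to each truncation $G^{\leq l}$ via Proposition~\ref{prop: structure group}~(c)–(d), where only finitely many factors are nontrivial and Lemma~\ref{lem: group homomorphism} applies directly.
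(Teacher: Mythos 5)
Your proposal is correct and follows essentially the same route as the paper's proof: apply the homomorphism $\rho$ from Lemma~\ref{lem: group homomorphism} to convert each $\Psi[{\bf n}]^{u_{\bf n}(c,b)}=\Psi[{\bf n}]^{\delta({\bf n})\hat{u}_{\bf n}(c,b)}$ into the wall function $(1+\hat{\bf y}^{\bf n})^{\hat{u}_{\bf n}(c,b)}$, and combine the walls sharing a normal vector using $\mathfrak{p}_{f}\mathfrak{p}_{f'}=\mathfrak{p}_{ff'}$. You merely perform the two steps in the opposite order and spell out the convergence and path-ordered-product bookkeeping that the paper leaves implicit.
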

\begin{proof}
By Lemma~\ref{lem: group homomorphism}, we have
\begin{equation}
\rho(\Psi[{\bf n}]^{u_{\bf n}(c,b)})=\rho(\Psi[{\bf n}]^{\delta({\bf n})\hat{u}_{\bf n}(c,b))})=\mathfrak{p}_{(1+\hat{\bf y}^{\bf n})^{\hat{u}_{\bf n}(c,b)}}.
\end{equation}
Thus, we have $(\mathfrak{d}_{\bf n},\Psi[{\bf n}]^{u_{\bf n}(c,b)})=[\mathfrak{d},(1+\hat{\bf y}^{\bf n})^{\hat{u}_{\bf n}(c,b)}]$. By combining the walls which has the same normal vector, we may obtain the claim.
\end{proof}
When we argue the cluster scattering diagram based on this expression, the factor
\begin{equation}
g({\bf n};b,c)=\frac{\gcd(n_1b,n_2c)}{\gcd(n_1,n_2)} \quad ({\bf n}=(n_1,n_2))
\end{equation}
is helpful. This factor appears in \cite{ERS24}.
By using this factor, we may express $\delta({\bf n})^{-1}=\frac{\gcd(n_1,n_2)}{bc}g({\bf n};b,c)$ and
\begin{equation}\label{eq: u hat}
\hat{u}_{\bf n}(c,b)=\gcd(n_1,n_2)g({\bf n};b,c)\frac{u_{\bf n}(c,b)}{bc}.
\end{equation}
Thus, Proposition~\ref{prop: exponent of dilogarithm identity} may be rephrased as follows.
\begin{thm}\label{thm: expression of wall function}
For each ${\bf n}_0=(n_1',n_2') \in N^{+}_{\mathrm{pr}}$, the wall function $f_{{\bf n}_0}$ may be expressed as
\begin{equation}\label{eq: factrized form of wall functions}
f_{{\bf n}_0}=\left\{\prod_{k=1}^{\infty}(1+\hat{\bf y}^{{k {\bf n}_0}})^{U_{k{\bf n}_0}(c,b)}
\right\}^{g({\bf n}_0;b,c)},
\end{equation}
where
\begin{equation}
U_{k{\bf n}_0}(c,b) = \frac{k}{bc}u_{k{\bf n}_0}(c,b) \in \mathbb{Q}_{\geq 0}.
\end{equation}
Moreover, these exponents $U_{k{\bf n}_0}(c,b)$ satisfy the following properties.
\\
\textup{(a)} For any ${\bf n}=(n_1,n_2) \in N^{+}$, we may express
\begin{equation}\label{eq: expression of U}
U_{{\bf n}}(c,b)=\frac{1}{bc}\sum_{\substack{1 \leq i \leq n_1,\\
1 \leq j \leq n_2}} \alpha_{\bf n}(i,j) \binom{c}{i}\binom{b}{j}
\end{equation}
for some nonnegative integers $\alpha_{\bf n}(i,j) \in \mathbb{Z}_{\geq 0}$ independent of $b$ and $c$.
\\
\textup{(b)} Now we view $b,c$ as indeterminates. Then, $U_{\bf n}(c,b)$ is a polynomial in $b,c$. Moreover, it has the maximum degree $\deg_{(c,b)}U_{\bf n}(c,b)=(n_1-1,n_2-1)$.
\end{thm}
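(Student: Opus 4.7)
The plan is essentially a bookkeeping rewrite: substitute the explicit formula for $u_{k{\bf n}_0}(c,b)$ from Proposition~\ref{prop: exponent of dilogarithm identity} into the wall-function expression of Proposition~\ref{prop: wall function}, and then separate out the common factor $g({\bf n}_0;b,c)$. The two preliminary identities I would record first are the already-displayed $\delta({\bf n})^{-1} = \gcd(n_1,n_2)\,g({\bf n};b,c)/(bc)$, together with the primitivity observation $g(k{\bf n}_0;b,c) = g({\bf n}_0;b,c)$, which follows by pulling $k$ out of both $\gcd(kn_1'b,kn_2'c)$ and $\gcd(kn_1',kn_2') = k$ in the defining ratio. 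With these in hand, the exponent in Proposition~\ref{prop: wall function} rewrites as
\begin{equation*}
\hat u_{k{\bf n}_0}(c,b) = \frac{u_{k{\bf n}_0}(c,b)}{\delta(k{\bf n}_0)} = g({\bf n}_0;b,c)\cdot \frac{k}{bc}\,u_{k{\bf n}_0}(c,b),
\end{equation*}
and setting $U_{k{\bf n}_0}(c,b) := \frac{k}{bc}u_{k{\bf n}_0}(c,b)$, the $k$-independent factor $g({\bf n}_0;b,c)$ can be pulled out of the infinite product, yielding the factorization (\ref{eq: factrized form of wall functions}).

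For part (a), I would substitute Proposition~\ref{prop: exponent of dilogarithm identity} into this definition of $U_{k{\bf n}_0}$: the leading $1/\gcd(kn_1',kn_2') = 1/k$ cancels exactly with the $k$ in $(k/bc)u_{k{\bf n}_0}$, producing $\frac{1}{bc}\sum \alpha_{k{\bf n}_0}(i,j)\binom{c}{i}\binom{b}{j}$ with the same nonnegative integer coefficients $\alpha_{k{\bf n}_0}(i,j)$. For the polynomiality in (b), viewing $b,c$ as indeterminates, each $\binom{c}{i}$ with $i\geq 1$ has $c$ as a linear factor in its numerator, and symmetrically $b \mid \binom{b}{j}$; consequently $bc \mid \binom{c}{i}\binom{b}{j}$ in $\mathbb{Q}[b,c]$, and $U_{\bf n}(c,b)$ is a polynomial. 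The maximum degree $(n_1-1,n_2-1)$ reads off from the top-degree term $(i,j) = (n_1,n_2)$, which does not vanish because $\alpha_{\bf n}(n_1,n_2) \neq 0$ by Proposition~\ref{prop: exponent of dilogarithm identity}.

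The main obstacle is the integrality claim $U_{k{\bf n}_0}(c,b) \in \mathbb{Z}_{\geq 0}$ when $b,c$ are specialized to positive integers. Nonnegativity is transparent from the expansion in (a). Integrality is not automatic from either the polynomial form or the integrality of $\hat u_{k{\bf n}_0}$: one must show that the integer $\hat u_{k{\bf n}_0}(c,b)$, which lies in $\mathbb{Z}_{\geq 0}$ by Proposition~\ref{prop: ordering lemmma}, is actually divisible by $g({\bf n}_0;b,c)$. My plan here is a prime-by-prime argument: for each prime $p\mid \gcd(n_1'b,n_2'c)$, the primitivity of $(n_1',n_2')$ forces $p$ to divide at least one of $b, c$, and I would track the $p$-adic valuation of $\sum \alpha_{k{\bf n}_0}(i,j)\binom{c}{i}\binom{b}{j}$ against that of $bc\cdot g({\bf n}_0;b,c)$, using the extra power of $p$ hidden in the binomial factors $\binom{c}{i}$ and $\binom{b}{j}$. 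This $p$-adic divisibility appears to be the deepest input; all the other steps are purely formal algebraic manipulation.
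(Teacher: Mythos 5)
Your main computation is exactly the paper's proof: the same substitution of $\hat u_{k{\bf n}_0}(c,b)=g({\bf n}_0;b,c)\cdot\frac{k}{bc}u_{k{\bf n}_0}(c,b)$ into (\ref{eq: wall function}) using $g(k{\bf n}_0;b,c)=g({\bf n}_0;b,c)$ and $\gcd(kn_1',kn_2')=k$; the same cancellation of $k$ against $1/\gcd(kn_1',kn_2')$ for part (a); and the same observation for part (b) that $c$ divides $\binom{c}{i}$ and $b$ divides $\binom{b}{j}$ in $\mathbb{Q}[b,c]$, with the top degree read off from $\alpha_{\bf n}(n_1,n_2)\neq 0$. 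Up to that point the proposal and the paper coincide.

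Where you diverge is the integrality claim $U_{k{\bf n}_0}(c,b)\in\mathbb{Z}_{\geq 0}$. You are right that this does not follow from the formal manipulations: the ordering lemma gives $\hat u_{k{\bf n}_0}(c,b)\in\mathbb{Z}_{\geq 0}$, and one would additionally need $g({\bf n}_0;b,c)$ to divide $\hat u_{k{\bf n}_0}(c,b)$. The paper's own proof is silent on this point. However, the $p$-adic divisibility you propose to establish is false in general, so that step of your plan cannot be completed. Concretely, take $(b,c)=(2,1)$ and ${\bf n}_0=(1,2)$: by \cite[Prop.~8.8]{Aka23} (quoted in the proof of Proposition~\ref{prop: 5,6}) one has $u_{(1,2)}(c,b)=c\binom{b}{2}$, hence $U_{(1,2)}(c,b)=\frac{1}{b}\binom{b}{2}=\frac{b-1}{2}$, which equals $\frac12$ at $b=2$; equivalently, $\hat u_{(1,2)}(1,2)=1$ while $g((1,2);2,1)=2$. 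So only $U_{k{\bf n}_0}(c,b)\in\mathbb{Q}_{\geq 0}$ and polynomiality over $\mathbb{Q}$ are available from the quoted inputs, and you should not sink effort into the valuation argument. Everything downstream in the paper uses only the factorization (\ref{eq: factrized form of wall functions}), the expansion (\ref{eq: expression of U}), nonnegativity, and the degree bounds, none of which require the integrality; the right move is to flag the discrepancy rather than to try to prove the divisibility.
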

\begin{proof}
Note that $g(k{\bf n}_0;b,c)=g({\bf n}_0;b,c)$ and $\gcd(kn_1',kn_2')=k$ hold for any $k=1,2,\dots$. Thus, by substituting
\begin{equation}
\hat{u}_{k{\bf n}_0}(c,b)\overset{(\ref{eq: u hat})}{=}kg(k{\bf n}_0;b,c)\frac{u_{k{\bf n}_0}(c,b)}{bc}=g({\bf n}_0;b,c)U_{k{\bf n}_0}(c,b)
\end{equation}
into (\ref{eq: wall function}), we may obtain (\ref{eq: factrized form of wall functions}). The claim (a) follows from Proposition~\ref{prop: exponent of dilogarithm identity}. The claim (b) may be shown by (a). (Note that $\frac{1}{bc}\binom{c}{i}\binom{b}{j}=\frac{1}{i!j!}(c-1)\cdots(c-i+1)(b-1)\cdots(b-j+1)$ is a polynomial when $i,j \geq 1$.)
\end{proof}

\section{Translation into the scattering coefficients}\label{sec: translation}
Our main purpose is to translate $u_{\bf n}(c,b)$ into the following coefficients.
\begin{dfn}
For each ${\bf n}_0$, let $f_{{\bf n}_0} \in \mathbb{Q}[[\hat{\bf y}^{{\bf n}_0}]]$ as in Proposition~\ref{prop: wall function}. Then, we write its coefficients as follows:
\begin{equation}
f_{{\bf n}_0}=1+\sum_{k=1}^{\infty}\tau^{(b,c)}(k{\bf n}_0)\hat{\bf y}^{k{\bf n}_0}.
\end{equation}
\end{dfn}
By a direct calculation, we may obtain the following equality.
\begin{lem}\label{lem: u to tau}
For any ${\bf n}_0 \in N^{+}_{\mathrm{pr}}$ and $k=1,2,3,\dots$, we have
\begin{equation}\label{eq: u to tau}
\tau^{(b,c)}(k{\bf n}_0)=\sum_{\substack{
s_1,s_2,\dots,s_k \geq 0,\\
s_1+2s_2+\cdots+ks_k=k}}
\prod_{j=1}^{k}\binom{g({\bf n}_0;b,c)U_{j{\bf n}_0}(c,b)}{s_j}.
\end{equation}
\end{lem}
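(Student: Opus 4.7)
The plan is to expand the factorized form of the wall function from Theorem~\ref{thm: expression of wall function} as a product of binomial series, and then read off the coefficient of $\hat{\bf y}^{k{\bf n}_0}$ directly. The computation is essentially mechanical once the product form is in place; the only care required is tracking which indices $j$ actually contribute.

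First, I would rewrite the factorized form
\begin{equation*}
f_{{\bf n}_0}=\left\{\prod_{j=1}^{\infty}(1+\hat{\bf y}^{j{\bf n}_0})^{U_{j{\bf n}_0}(c,b)}\right\}^{g({\bf n}_0;b,c)}
\end{equation*}
by absorbing the outer exponent $g({\bf n}_0;b,c)$ into each factor. This step is justified because the factors $(1+\hat{\bf y}^{j{\bf n}_0})^{U_{j{\bf n}_0}(c,b)}$ all lie in the commutative ring $\mathbb{Q}[[\hat{\bf y}^{{\bf n}_0}]]$, and raising a product of commuting units to a rational power distributes over the product. Hence
\begin{equation*}
f_{{\bf n}_0}=\prod_{j=1}^{\infty}(1+\hat{\bf y}^{j{\bf n}_0})^{g({\bf n}_0;b,c)U_{j{\bf n}_0}(c,b)}.
\end{equation*}

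Next, I would expand each factor by the generalized binomial theorem:
\begin{equation*}
(1+\hat{\bf y}^{j{\bf n}_0})^{g({\bf n}_0;b,c)U_{j{\bf n}_0}(c,b)}=\sum_{s_j \geq 0}\binom{g({\bf n}_0;b,c)U_{j{\bf n}_0}(c,b)}{s_j}\hat{\bf y}^{js_j{\bf n}_0},
\end{equation*}
and multiply these series together. The coefficient of $\hat{\bf y}^{k{\bf n}_0}$ in the product is the sum, over all nonnegative integer sequences $(s_j)_{j \geq 1}$ with $\sum_j j s_j = k$, of the corresponding product of binomial coefficients.

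Finally, I would observe that in any such sequence we automatically have $s_j = 0$ for $j > k$, since otherwise $\sum j s_j \geq j > k$. The factors with $s_j = 0$ contribute $\binom{\cdot}{0}=1$, so the product truncates to $j = 1, \dots, k$, giving exactly the right-hand side of \eqref{eq: u to tau}. Comparing with the definition $f_{{\bf n}_0}=1+\sum_{k \geq 1}\tau^{(b,c)}(k{\bf n}_0)\hat{\bf y}^{k{\bf n}_0}$ completes the proof. There is no real obstacle here; the only subtle point is confirming that manipulation of the infinite product is valid, which follows because only finitely many factors contribute to any fixed degree in $\hat{\bf y}^{{\bf n}_0}$.
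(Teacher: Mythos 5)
Your proposal is correct and follows essentially the same route as the paper: expand the factorized form of $f_{{\bf n}_0}$ (with the exponent $g({\bf n}_0;b,c)$ distributed over the factors) via the binomial series, multiply, and read off the coefficient of $\hat{\bf y}^{k{\bf n}_0}$, noting that only $s_1,\dots,s_k$ can be nonzero. The only difference is that you make explicit the (harmless) step of distributing the outer exponent over the infinite product, which the paper performs implicitly.
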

\begin{proof}
For simplicity, we set $z=\hat{\bf y}^{{\bf n}_0}$. Then, by (\ref{eq: factrized form of wall functions}), we have
\begin{equation}
f_{{\bf n}_0}=\prod_{k=1}^{\infty}(1+z^k)^{g({\bf n}_0;b,c)U_{j{\bf n}_0}(c,b)}.
\end{equation}
Note that $(1+z^k)^{g({\bf n}_0;b,c)U_{j{\bf n}_0}(c,b)}$ may be expanded as
\begin{equation}
(1+z^k)^{g({\bf n}_0;b,c)U_{j{\bf n}_0}(c,b)}=\sum_{s=0}^{\infty}\binom{{g({\bf n}_0;b,c)U_{j{\bf n}_0}(c,b)}}{s}z^{ks}.
\end{equation}
Thus, we have
\begin{equation}
\begin{aligned}
f_{{\bf n}_0}&=\prod_{k=1}^{\infty}\left\{\sum_{s_k=0}^{\infty}\binom{{g({\bf n}_0;b,c)U_{j{\bf n}_0}(c,b)}}{s_k}z^{ks_k}\right\}\\
&=\sum_{s_1,s_2,s_3,\dots =0}^{\infty}\left\{\prod_{j=1}^{\infty}\binom{{g({\bf n}_0;b,c)U_{j{\bf n}_0}(c,b)}}{s_j}\right\}z^{s_1+2s_2+3s_3+\cdots}.
\end{aligned}
\end{equation}
Consider $\tau^{(b,c)}(k{\bf n}_0)$, which is the coefficient of $z^k$ in the above equality. A factor in the above sum contributes to the coefficients of $z^k$ if and only if $s_1+2s_2+3s_3+\cdots=k$. So, we can restrict the sum region by $s_{k+1}=s_{k+2}=\cdots=0$. (If at least one $s_{j}$ ($j \geq k+1$) is positive, $s_1+2s_2+\dots$ is larger than $k$.) Thus, we may obtain (\ref{eq: u to tau}). 
\end{proof}

\section{Proof of some conjectures in \cite{ERS24}}\label{sec: proof of conjectures}
We start to show some conjectures introduced by \cite{ERS24}.
\subsection{Conjecture~1, 2, 3, 5, 6}
\begin{prop}[{\cite[Conj.~1, 2, 3]{ERS24}}]\label{prop: 2}\label{prop: 1,2,3}
For any ${\bf n}=(n_1,n_2)^{\top} \in N^{+}$, set $g=g({\bf n};b,c)$.\\
\textup{(a)}\ $\tau^{(b,c)}({\bf n})$ may be expressed as a polynomial in $g$, $b$, and $c$.
\\
\textup{(b)}\ $\deg_{g}\tau^{(b,c)}({\bf n})=\gcd(n_1,n_2)$ and $\tau^{(b,c)}({\bf n})$ has a factor $g$.
\\
\textup{(c)}\ $\deg_{(b,c)}\tau^{(b,c)}({\bf n})=(n_2-1,n_1-1)$.
\end{prop}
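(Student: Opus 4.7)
The plan is to write ${\bf n} = k{\bf n}_0$ with ${\bf n}_0 = (n_1', n_2') \in N^{+}_{\mathrm{pr}}$ and $k = \gcd(n_1, n_2)$, and to exploit the identity $g(k{\bf n}_0; b, c) = g({\bf n}_0; b, c) = g$ so that Lemma~\ref{lem: u to tau} reads
\[
\tau^{(b,c)}({\bf n}) = \sum_{\substack{s_1,\ldots,s_k \geq 0 \\ s_1 + 2s_2 + \cdots + ks_k = k}} \prod_{j=1}^{k} \binom{gU_{j{\bf n}_0}(c,b)}{s_j}.
\]
All three parts will then follow from analyzing the right-hand side as a polynomial in the three independent indeterminates $g, b, c$, relying on Theorem~\ref{thm: expression of wall function}(b), which gives $U_{j{\bf n}_0}(c,b) \in \mathbb{Q}[b,c]$ with bidegree $\deg_{(c,b)} U_{j{\bf n}_0} = (jn_1' - 1,\ jn_2' - 1)$. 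Part (a) is then immediate: each $\binom{gU_{j{\bf n}_0}}{s_j}$ is a polynomial in $gU_{j{\bf n}_0}$ of degree $s_j$, and the outer sum is finite.

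For part (b), $\deg_g \binom{gU_{j{\bf n}_0}}{s_j} = s_j$, so each summand has $g$-degree $\sum_j s_j$ subject to $\sum_j j s_j = k$. This quantity is maximized at $k$, uniquely by $(s_1,\ldots,s_k) = (k, 0, \ldots, 0)$, contributing leading $g^k$-coefficient $U_{{\bf n}_0}(c,b)^k / k!$; this polynomial is nonzero since $U_{{\bf n}_0}$ is, so $\deg_g \tau^{(b,c)}({\bf n}) = k = \gcd(n_1, n_2)$. The factor of $g$ comes from specializing $g = 0$: every $\binom{0}{s_j}$ with $s_j > 0$ vanishes, which would force $\sum_j j s_j = 0 \neq k$ in any surviving summand, so $\tau^{(b,c)}({\bf n})\big|_{g = 0} = 0$.

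For part (c), I compute
\[
\deg_b \prod_{j=1}^{k} \binom{gU_{j{\bf n}_0}(c,b)}{s_j} = \sum_j s_j (jn_2' - 1) = n_2 - \sum_j s_j,
\]
and symmetrically for $\deg_c$. Maximizing $\deg_b$ amounts to minimizing $\sum_j s_j$ under $\sum_j j s_j = k$; the minimum is $1$, uniquely attained at $s_k = 1$, which contributes the single summand $gU_{k{\bf n}_0}(c,b)$, whose $b^{n_2-1}$-coefficient is nonzero by Theorem~\ref{thm: expression of wall function}(b) and cannot be cancelled by any other summand (each of strictly smaller $b$-degree). Hence $\deg_b \tau^{(b,c)}({\bf n}) = n_2 - 1$, and the analogous argument gives $\deg_c \tau^{(b,c)}({\bf n}) = n_1 - 1$. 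The substantive ingredient — bidegree control on $U_{j{\bf n}_0}$ — is already furnished by Theorem~\ref{thm: expression of wall function}(b); the only (mild) obstacle is confirming uniqueness of the extremal partitions in each degree computation, so that no cancellation spoils the leading coefficient, and this is immediate from the shape of the constraint $\sum_j j s_j = k$.
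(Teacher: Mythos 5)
Your proof is correct and takes essentially the same route as the paper: write ${\bf n}=k{\bf n}_0$, expand via Lemma~\ref{lem: u to tau}, and locate the unique extremal tuples $(k,0,\dots,0)$ for the $g$-degree and $(0,\dots,0,1)$ for the $b$- and $c$-degrees using the bidegree of $U_{j{\bf n}_0}$ from Theorem~\ref{thm: expression of wall function}. Your identity $\deg_b\prod_{j}\binom{gU_{j{\bf n}_0}(c,b)}{s_j}=n_2-\sum_j s_j$ is in fact the corrected form of the paper's slightly garbled display (\ref{eq: deg b}), and your $g=0$ specialization is just a rephrasing of the paper's observation that every summand carries a factor of $g$.
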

\begin{proof}
The statement (a) follows from Lemma~\ref{lem: u to tau}. (Note that a product and a sum of some polynomials are also a polynomial.)
Set ${\bf n}=k{\bf n}_0$ for some ${\bf n}_0 \in N^{+}_{\mathrm{pr}}$ and $k=1,2,\cdots$. Then, $k=\gcd(n_1,n_2)$ holds. By (\ref{eq: u to tau}), $\tau^{(b,c)}({\bf n})$ is the sum of the following terms
\begin{equation}\label{eq: terms of tau}
\prod_{j=1}^{k}\binom{gU_{j{\bf n}_0}(c,b)}{s_j},
\end{equation}
where $s_j \geq 0$ and $s_1+2s_2+3s_3+\dots+ks_k=k$.
For each factor, since a binomial coefficient $\binom{x}{i}$ is a polynomial in $x$ with $\deg_x\binom{x}{i}=i$, we have
\begin{align}
\deg_{g}\prod_{j=1}^{k}\binom{gU_{j{\bf n}_0}(c,b)}{s_j}&=\sum_{j=1}^{k}s_j \leq k,
\end{align}
where the equality holds when $(s_1,s_2,\dots,s_k)=(k,0,\dots,0)$. Thus, $\deg_{g}\tau^{(b,c)}({\bf n})=\gcd(n_1,n_2)$. Moreover, each term (\ref{eq: terms of tau}) has a factor $g$. Thus, $\tau^{(b,c)}({\bf n})$ also has a factor $g$. Hence, (b) holds. Set ${\bf n}_0=(n_1',n_2')$. Note that $n_1=kn_1'$ and $n_2=kn_2'$ hold. Moreover, since $\deg_{b} U_{j{\bf n}_0}(c,b)=jn_2'-1$ by Theorem~\ref{thm: expression of wall function}, we have
\begin{equation}\label{eq: deg b}
\begin{aligned}
\deg_{b}\prod_{j=1}^{k}\binom{gU_{j{\bf n}_0}(c,b)}{s_j}&=\sum_{j=1}^{n}j(n'_2-1)s_j=n'_2\sum_{j=1}^{n}js_j-\sum_{j=1}^{n}j\\
&= n'_2k-\sum_{j=1}^{n}j\leq n_2 -1,
\end{aligned}
\end{equation}
and, by doing the same argument,
\begin{equation}\label{eq: deg c}
\deg_{c}\prod_{j=1}^{k}\binom{gU_{j{\bf n}_0}(c,b)}{s_j}\leq n_1-1.
\end{equation}
Moreover, the equalities of these two inequalities hold when $(s_1,\dots,s_k)=(0,0,\dots,1)$. Note that the equality of (\ref{eq: deg b}) and (\ref{eq: deg c}) happens at the same $(s_1,\dots,s_k)=(0,\dots,1)$, and since $U_{k{\bf n}_0}(c,b)$ has the maximum degree $c^{n_1-1}b^{n_2-1}$, the claim (c) holds.
\end{proof}
\begin{prop}[{\cite[Conj.~5, 6]{ERS24}}]\label{prop: 5,6}
For any $n_1,n_2 \in \mathbb{Z}_{\geq 0}$, we have
\begin{equation}
\tau^{(b,c)}(n_1,1)=\frac{g((n_1,1);b,c)}{c}\binom{c}{n_1},
\quad
\tau^{(b,c)}(1,n_2)=\frac{g((1,n_2);b,c)}{b}\binom{b}{n_2}.
\end{equation}
\end{prop}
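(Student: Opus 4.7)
The plan is to apply Lemma~\ref{lem: u to tau} with the primitive vector ${\bf n}_0 = (n_1, 1)$ and $k = 1$. The constraint $s_1 + 2s_2 + \cdots = 1$ with $s_j \in \mathbb{Z}_{\geq 0}$ has the unique solution $s_1 = 1$ and $s_j = 0$ for $j \geq 2$, so the sum collapses to
\[
\tau^{(b, c)}(n_1, 1) = g((n_1, 1); b, c) \cdot U_{(n_1, 1)}(c, b).
\]
Since $\gcd(n_1, 1) = 1$ gives $U_{(n_1, 1)}(c, b) = \frac{1}{bc} u_{(n_1, 1)}(c, b)$, the first identity reduces to proving $u_{(n_1, 1)}(c, b) = b\binom{c}{n_1}$.

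By Theorem~\ref{thm: expression of wall function}(a), only $j = 1$ contributes to the expansion, so $u_{(n_1, 1)}(c, b) = b \sum_{i=1}^{n_1} \alpha_{(n_1, 1)}(i, 1) \binom{c}{i}$ with $\alpha_{(n_1, 1)}(i, 1) \in \mathbb{Z}_{\geq 0}$; the goal is thus $\alpha_{(n_1, 1)}(i, 1) = \delta_{i, n_1}$. I would identify the polynomial $u_{(n_1, 1)}(c, b)/b$ (of degree at most $n_1$ in $c$, by Proposition~\ref{prop: exponent of dilogarithm identity}) through its roots. The case $c = 0$ is immediate: $\Psi[{\bf e}_2]^b \Psi[{\bf e}_1]^0 = \Psi[{\bf e}_2]^b$ is already ordered, so $u_{(n_1, 1)}(0, b) = 0$. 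For each integer $c$ with $1 \leq c \leq n_1 - 1$, the key claim is that $(n_1, 1)$ cannot appear in the ordered decomposition of $\Psi[{\bf e}_2]^b \Psi[{\bf e}_1]^c$: heuristically, $c$ copies of $\Psi[{\bf e}_1]$ cannot contribute more than $c$ units of ${\bf e}_1$-weight to any vector in the decomposition. Combined with the degree bound, these $n_1$ vanishings force $u_{(n_1, 1)}(c, b) = A \cdot b\binom{c}{n_1}$ for some $A \in \mathbb{Z}_{\geq 1}$, and evaluating at $c = n_1$---where the only way to form $(n_1, 1)$ is by combining all $n_1$ of the $\Psi[{\bf e}_1]$-factors with any one of the $b$ copies of $\Psi[{\bf e}_2]$, yielding multiplicity $b$---fixes $A = 1$.

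The second identity for $\tau^{(b, c)}(1, n_2)$ would then follow from the symmetry of $\mathfrak{D}_{b, c}$ under swapping $b \leftrightarrow c$ and ${\bf e}_1 \leftrightarrow {\bf e}_2$, which interchanges the roles of the two formulas. The main obstacle is rigorously justifying the vanishing claim for integer $c$ in the range $1 \leq c < n_1$. While natural from the weight-counting heuristic, pentagon steps do not literally preserve the sum of ${\bf e}_1$-components, so a direct filtration argument is delicate. I would pursue either a careful analysis of the reordering procedure---leveraging the fact that the degree-$(n_1, 1)$ contribution to $\log(\Psi[{\bf e}_2]^b \Psi[{\bf e}_1]^c)$ is a polynomial in $c$ of degree at most $n_1$ that can be shown to vanish at these specializations through a combinatorial count of the iterated Lie brackets in the Baker-Campbell-Hausdorff expansion---or, alternatively, appeal to the explicit tight-grading combinatorics of \cite{BLM25}.
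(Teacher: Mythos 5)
Your opening reduction coincides with the paper's: Lemma~\ref{lem: u to tau} with $k=1$ forces $s_1=1$, $s_j=0$ for $j\geq 2$, giving $\tau^{(b,c)}(n_1,1)=g((n_1,1);b,c)\,U_{(n_1,1)}(c,b)$ with $U_{(n_1,1)}(c,b)=\tfrac{1}{bc}u_{(n_1,1)}(c,b)$, so the whole statement hinges on the identities $u_{(n_1,1)}(c,b)=b\binom{c}{n_1}$ and $u_{(1,n_2)}(c,b)=c\binom{b}{n_2}$. The paper does not prove these: it quotes them directly from \cite[Prop.~8.8]{Aka23}. You instead attempt to derive the first one, and that is exactly where the genuine gap sits, as you partly concede yourself.

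Your interpolation scheme is sound in outline (a polynomial of the form $b\sum_{i=1}^{n_1}\alpha(i,1)\binom{c}{i}$ vanishing at the integers $c=0,1,\dots,n_1-1$ must be a multiple of $b\binom{c}{n_1}$), but both of its inputs are unestablished. The vanishing of $u_{(n_1,1)}(c,b)$ for integer $1\leq c\leq n_1-1$ rests on the claim that ``$c$ copies of $\Psi[{\bf e}_1]$ cannot contribute more than $c$ units of ${\bf e}_1$-weight,'' yet the reordering of Proposition~\ref{prop: ordering lemmma} proceeds by splitting $\Psi[{\bf e}_1]^{c}$ and $\Psi[{\bf e}_2]^{b}$ into factors with exponents $1/\{{\bf n}',{\bf n}\}$, which are in general fractional and not in bijection with ``copies,'' and the infinitely iterated pentagon steps generate new factors $\Psi[{\bf n}+{\bf n}']$ whose subsequent interactions make the weight bookkeeping nontrivial; the same objection applies to the multiplicity count fixing $A=b$ at $c=n_1$. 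Nothing quoted in this paper (Proposition~\ref{prop: exponent of dilogarithm identity}, Proposition~\ref{prop: ordering lemmma}) supplies these evaluations, so to close the argument you would need essentially the content of \cite[Prop.~8.8]{Aka23} itself, or the explicit combinatorics of \cite{BLM25}, or a genuinely carried-out Baker--Campbell--Hausdorff computation. The $b\leftrightarrow c$, ${\bf e}_1\leftrightarrow{\bf e}_2$ symmetry deducing the second identity from the first is fine.
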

\begin{proof}
It follows from Lemma~\ref{lem: u to tau} and \cite[Prop.~8.8]{Aka25}, that is, $u_{(1,n_2)}(c,b)=c\binom{b}{n_2}$ and $u_{(n_1,1)}(c,b)=b\binom{c}{n_1}$.
\end{proof}

\subsection{Conjecture~11}
\begin{prop}[{\cite[Conj.~11]{ERS24}}]\label{prop: 11}
For any ${\bf n}=(n_1,n_2) \in N^{+}$, $\tau^{(b,b)}({\bf n})$ is a polynomial in $b$ with $\deg_b \tau^{(b,b)}({\bf n})=n_1+n_2-1$ that expands positively in the basis $\{\binom{b}{0},\binom{b}{1},\binom{b}{2},\dots\}$.
\end{prop}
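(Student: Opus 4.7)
The plan is to specialize Lemma~\ref{lem: u to tau} to $c=b$ and analyze the resulting sum. Write ${\bf n} = k{\bf n}_0$ with ${\bf n}_0 = (n_1',n_2') \in N^+_{\mathrm{pr}}$ and $k = \gcd(n_1,n_2)$. Since $g({\bf n}_0;b,b) = \gcd(n_1'b,n_2'b) = b$, Lemma~\ref{lem: u to tau} specializes to
\[
\tau^{(b,b)}({\bf n}) = \sum_{\substack{s_1,\ldots,s_k\geq 0\\ s_1+2s_2+\cdots+ks_k=k}}\prod_{j=1}^k \binom{b V_j(b)}{s_j}, \qquad V_j(b) := U_{j{\bf n}_0}(b,b),
\]
which is a polynomial in $b$ by Theorem~\ref{thm: expression of wall function}.

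For the degree, Theorem~\ref{thm: expression of wall function}(b) gives $V_j(b)$ of degree $j(n_1'+n_2')-2$ in $b$ with positive leading coefficient (inherited from the $c^{jn_1'-1}b^{jn_2'-1}$ monomial of $U_{j{\bf n}_0}$, whose coefficient $\alpha_{j{\bf n}_0}(jn_1',jn_2')$ is strictly positive), so $\deg(bV_j(b)) = j(n_1'+n_2')-1$. The product $\prod_j \binom{bV_j(b)}{s_j}$ thus has degree $(n_1+n_2)-\sum_j s_j$, maximized uniquely at $(s_1,\ldots,s_k) = (0,\ldots,0,1)$, where the contribution is $bV_k(b)$ of degree $n_1+n_2-1$ with positive leading coefficient. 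Hence $\deg_b \tau^{(b,b)}({\bf n}) = n_1+n_2-1$.

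The core of the argument is positivity of the expansion in the basis $\{\binom{b-1}{l}\}_{l\geq 0}$. The strategy is to show that each factor $\binom{bV_j(b)}{s_j}$ has nonnegative expansion, then invoke closure of the cone of such polynomials under products and sums, which follows from the classical identity $\binom{x}{p}\binom{x}{q} = \sum_{l=0}^{\min(p,q)}\binom{p+q-l}{p}\binom{p}{l}\binom{x}{p+q-l}$ applied with $x = b-1$. Using Theorem~\ref{thm: expression of wall function}(a) together with $\frac{1}{bc}\binom{c}{i}\binom{b}{l} = \frac{1}{il}\binom{c-1}{i-1}\binom{b-1}{l-1}$, one rewrites
\[
V_j(b) = \sum_{i,l\geq 1}\frac{\alpha_{j{\bf n}_0}(i,l)}{il}\binom{b-1}{i-1}\binom{b-1}{l-1},
\]
a nonnegative combination of products of basis elements, and $b = \binom{b-1}{0}+\binom{b-1}{1}$ lies in the cone, so $bV_j(b)$ does too.

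The main obstacle is the step from nonnegativity of $P(b) := bV_j(b)$ to that of $\binom{P(b)}{s_j}$. My proposed route is to expand $P(b) = \sum_l c_l \binom{b-1}{l}$ with $c_l \geq 0$, interpret the decomposition combinatorially as a disjoint union of $\binom{b-1}{l}$-sized blocks, and apply Vandermonde to decompose $\binom{P(b)}{s_j}$ into products of the form $\prod \binom{\binom{b-1}{l}}{s'}$. This reduces the problem to a sub-claim that $\binom{\binom{b-1}{l}}{s}$ expands positively in $\{\binom{b-1}{l'}\}$, which I expect to follow from a direct finite-difference verification or from a combinatorial interpretation of these iterated binomial coefficients.
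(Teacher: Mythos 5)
Your overall skeleton matches the paper's proof: specialize $g({\bf n};b,b)=b$ in Lemma~\ref{lem: u to tau}, run the degree count on $\prod_j\binom{bU_{j{\bf n}_0}(b,b)}{s_j}$ with the maximum attained at $(s_1,\dots,s_k)=(0,\dots,0,1)$, and reduce positivity to showing that $bU_{j{\bf n}_0}(b,b)$ lies in the nonnegative cone spanned by $\{\binom{b-1}{l}\}$ via the identity $\frac{1}{b}\binom{b}{i}=\frac{1}{i}\binom{b-1}{i-1}$. All of that is correct and is essentially what the paper does.

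The gap is the step you yourself flag as ``the main obstacle'': passing from positivity of $P(b)=bU_{j{\bf n}_0}(b,b)$ to positivity of $\binom{P(b)}{s_j}$. The paper closes this by citing \cite[Prop.~2.7]{Aka23}, which asserts that the positive cone in the binomial basis is closed under both products \emph{and compositions}; applying the composition part with $f(x)=\binom{x}{s_j}$ is exactly what handles $\binom{P(b)}{s_j}$. Your substitute — Vandermonde on the block decomposition of $P$ plus the sub-claim that $\binom{\binom{b-1}{l}}{s}$ expands positively — is a viable route to reproving that closure, but as written it is not a proof: the sub-claim is left as an expectation, and the ``disjoint union of $\binom{b-1}{l}$-sized blocks'' interpretation silently requires the coefficients $c_l$ to be nonnegative \emph{integers}, whereas your formula exhibits them only as nonnegative rationals of the form $\alpha_{j{\bf n}_0}(i,l)/(il)$ times integers. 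Both points are repairable (integrality follows because $bU_{j{\bf n}_0}(b,b)$ is integer-valued on $\mathbb{Z}$, so its binomial-basis coefficients are integers, hence nonnegative integers; and $\binom{\binom{m}{l}}{s}$ counts $s$-element families of $l$-subsets of an $m$-set, which, classified by their union, gives a nonnegative integer expansion in $\{\binom{m}{p}\}$), but neither repair appears in the proposal, so the positivity claim is not established as it stands.
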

The following proof is proposed by Amanda Burcroff, Kyungyong Lee, and Lang Mou.
\begin{proof}
Set ${\bf n}=k{\bf n}_0$ with $k=\gcd(n_1,n_2)$ and ${\bf n}_0 \in N^{+}_{\mathrm{pr}}$.
When $b=c$, we have $g({\bf n};b,b)=b$. Thus, by (\ref{eq: u to tau}), we have
\begin{equation}\label{eq: tau for 11}
\tau^{(b,b)}({\bf n})=\sum_{\substack{
s_1,s_2,\dots,s_k \geq 0,\\
s_1+2s_2+\cdots+ks_k=k}}\prod_{j=1}^{k}\binom{bU_{j{\bf n}_0}(b,b)}{s_j}.
\end{equation}
Let ${\bf n}_0=(n'_1,n'_2)$. Since $\deg_b bU_{j{\bf n}_0}(b,b)=j(n'_1+n'_2)-1$, we have
\begin{equation}
\begin{aligned}
\deg_b\prod_{j=1}^{k}\binom{bU_{j{\bf n}_0}(b,b)}{s_j}&=\sum_{j}s_j\left\{j(n'_1+n'_2)-1\right\}\\
&=(n'_1+n'_2)\sum_{j=1}^{k}s_jj-\sum_{j=1}^{k}j =(n'_1+n'_2)k-\sum_{j}s_j\\
&\leq n_1+n_2-1.
\end{aligned}
\end{equation}
The maximum degree is taken at $(s_1,\dots,s_k)=(0,\dots,0,1)$. Thus, we have $\deg_b\tau^{(b,b)}({\bf n})=n_1+n_2-1$.
\par
Next, we show that $\tau^{(b,b)}({\bf n})$ may be expanded by $\{\binom{b}{0}, \binom{b}{1},\binom{b}{2},\dots\}$ with positive coefficients. By (\ref{eq: expression of U}), we have
\begin{equation}
\begin{aligned}
bU_{j{\bf n}_0}(b,b)&=\frac{1}{b}\sum_{\substack{1 \leq p \leq n_1, \\ 1 \leq q \leq n_2}}\alpha_{j{\bf n}_0}(p,q)\binom{b}{p}\binom{b}{q}\\
&=b\sum_{\substack{1 \leq p  \leq n_1, \\ 1 \leq q \leq n_2}}\frac{\alpha_{j{\bf n}_0}(p,q)}{pq}\binom{b-1}{p-1}\binom{b-1}{q-1}.
\end{aligned}
\end{equation}
By \cite[Prop.~2.7]{Aka25}, the following fact holds.
\begin{quote}
If a polynomial $f(x),g(x)$ may be expanded by $\{\binom{x}{l} \mid l \in \mathbb{Z}_{\geq 0} \}$ with positive coefficients, then its product $f(x)g(x)$ and its composition $f(g(x))$ also satisfy this property.
\end{quote}
We apply this fact by setting $x=b-1$. Then, we may show that $\binom{b-1}{p-1}\binom{b-1}{q-1}$ may be expanded by $\{\binom{b-1}{l} \mid l \in \mathbb{Z}_{\geq 0}\}$. Since $\alpha_{j{\bf n}_0}(p,q) \geq 0$, the sum of these polynomials is also expanded by $\{\binom{b-1}{l} \mid l \in \mathbb{Z}_{\geq 0} \}$ with positive coefficients. Namely, we may write
\begin{equation}
bU_{j{\bf n}_0}(b,b)=b\sum_{l}\beta_{l}\binom{b-1}{l}
\end{equation}
for some $\beta_l \in \mathbb{Q}_{\geq 0}$. Note that $b\binom{b-1}{l}=(l+1)\binom{b}{l+1}$. Thus, we have
\begin{equation}
bU_{j{\bf n}_0}(b,b)=\sum_{l}(l+1)\beta_l\binom{b}{l+1}.
\end{equation}
Thus, $bU_{j{\bf n}_0}(b,b)$ can be expanded by $\{\binom{b}{l} \mid l\}$ with positive coefficients. By (\ref{eq: tau for 11}), $\tau^{(b,b)}({\bf n})$ can be seen as the compositions and products of such polynomials. Thus, the claim holds.
\end{proof}

\subsection{Conjecture~15 and related results of Conjecture~14, 16,17,18}
Fix one ${\bf n}=(n_1,n_2)^{\top} \in N^{+}_{\mathrm{pr}}$ and set $g=g({\bf n};b,c)$. Since $\tau^{(b,c)}({\bf n})$ is a polynomial in $g$ (Proposition~\ref{prop: 2}) which has a factor $g$, every $\tau^{(b,c)}({\bf n})$ may be express
\begin{equation}
\tau^{(b,c)}({\bf n})=\sum_{k=1}^{\gcd(n_1,n_2)}\tau^{(b,c)}({\bf n};k)g^{k},
\end{equation}
where $\tau^{(b,c)}({\bf n};k)$ is a polynomial in $b,c$ and independent of $g$.
We may show the following properties.
\begin{prop}[Cf.~{\cite[Conj.~14-18]{ERS24}}]\label{prop: 14,15,16,17,18}
Let ${\bf n}=(n_1,n_2)^{\top} \in N^{+}_{\mathrm{pr}}$.
\\
\textup{(a)} For any $k=1,2,\dots,\gcd(n_1,n_2)$, we have
\begin{equation}
\deg_{b} \tau^{(b,c)}({\bf n};k) \leq n_2-k,\quad \deg_{c}\tau^{(b,c)}({\bf n};k) \leq n_1-k.
\end{equation}
\textup{(b)} If $\gcd(n_1,n_2)=1$, then, for any $k=1,2,\dots$, we have 
\begin{align}
\tau^{(b,c)}(k{\bf n};k)&=\frac{\tau^{(b,c)}({\bf n};1)^{k}}{k!},\label{eq: 15}\\
\tau^{(b,c)}(k{\bf n};k-1)&=\frac{\tau^{(b,c)}({\bf n};1)^{k-2}}{(k-2)!}p_{\bf n},\label{eq: 16-18}
\end{align}
where $p_{\bf n}$ is a polynomial in $b$ and $c$ independent of $k$.
\end{prop}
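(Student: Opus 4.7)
The plan is to apply Lemma~\ref{lem: u to tau} directly and track how each summand contributes to the $g$-expansion. Writing the primitive direction as ${\bf n}_0 \in N^{+}_{\mathrm{pr}}$, setting ${\bf n} = K {\bf n}_0$ with $K = \gcd(n_1, n_2)$, and abbreviating $U_j := U_{j{\bf n}_0}(c,b)$, Lemma~\ref{lem: u to tau} gives
\begin{equation*}
\tau^{(b,c)}({\bf n}) = \sum_{\substack{s_1,\dots,s_K \geq 0 \\ s_1 + 2s_2 + \cdots + K s_K = K}} \prod_{j=1}^{K} \binom{g U_j}{s_j}.
\end{equation*}
Each factor $\binom{g U_j}{s_j}$ is a polynomial in $g$ of degree exactly $s_j$, so the summand indexed by $(s_1,\dots,s_K)$ has total $g$-degree $\sum_j s_j$. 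This single observation drives both halves of the proof.

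For part (a), I would extract the coefficient of $g^k$ by restricting the sum to configurations with $\sum_j s_j = k$ and then bound the $b$-degree of each such product. Using $\deg_b U_{j{\bf n}_0} = j n_2' - 1$ from Theorem~\ref{thm: expression of wall function}(b) (writing ${\bf n}_0 = (n_1', n_2')$), the same calculation as in (\ref{eq: deg b}) gives $n_2' \sum_j j s_j - \sum_j s_j = n_2 - k$, so $\deg_b \tau^{(b,c)}({\bf n};k) \leq n_2 - k$, with the $c$-bound symmetric.

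For part (b), specializing to primitive ${\bf n}$ and applying the expansion above with $K = k$ to $\tau^{(b,c)}(k{\bf n})$, the constraints $\sum_j j s_j = k$ and $\sum_j s_j = m$ combine to $\sum_j (j-1) s_j = k - m$. For the coefficient of $g^k$ we have $m = k$, forcing $s_1 = k$ and $s_j = 0$ for $j \geq 2$; the lone term $\binom{g U_{\bf n}}{k}$ contributes $U_{\bf n}(c,b)^k / k!$ to the top $g$-degree, and since $\tau^{(b,c)}({\bf n};1) = U_{\bf n}(c,b)$ from the trivial $k=1$ case, equation~(\ref{eq: 15}) follows. For the coefficient of $g^{k-1}$, two configurations contribute: the subleading term from $m = k$, whose $g^{k-1}$-coefficient equals $-U_{\bf n}(c,b)^{k-1}/(2(k-2)!)$, and the leading term from $m = k-1$, whose constraint $\sum_j (j-1)s_j = 1$ has the unique solution $s_2 = 1$, $s_1 = k-2$, contributing $U_{2{\bf n}}(c,b)\cdot U_{\bf n}(c,b)^{k-2}/(k-2)!$. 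Summing,
\begin{equation*}
\tau^{(b,c)}(k{\bf n}; k-1) = \frac{U_{\bf n}(c,b)^{k-2}}{(k-2)!} \left(U_{2{\bf n}}(c,b) - \frac{U_{\bf n}(c,b)}{2}\right),
\end{equation*}
so we may take $p_{\bf n} = U_{2{\bf n}}(c,b) - U_{\bf n}(c,b)/2$, manifestly a polynomial in $b,c$ and independent of $k$.

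The only step requiring care is recognizing that the coefficient of $g^{k-1}$ receives contributions not only from the configurations with $\sum_j s_j = k-1$ (the leading term of $\binom{gU_{\bf n}}{k-2}\cdot g U_{2{\bf n}}$) but also from the configuration $\sum_j s_j = k$ (the subleading term of $\binom{gU_{\bf n}}{k}$); missing the latter would leave $p_{\bf n}$ either $k$-dependent or non-polynomial. Beyond this subtlety, the whole argument reduces to elementary bookkeeping with the constraint equations and binomial expansions supplied by Lemma~\ref{lem: u to tau}.
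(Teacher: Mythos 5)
Your proposal follows essentially the same route as the paper: expand via Lemma~\ref{lem: u to tau}, note that the summand indexed by $(s_1,\dots,s_K)$ is a polynomial of degree $\sum_j s_j$ in $g$, and read off the relevant $g$-coefficients. Part (b) is correct and coincides with the paper's computation, including the identification $p_{\bf n}=U_{2{\bf n}}(c,b)-\tfrac{1}{2}U_{\bf n}(c,b)$ and the key observation that the $g^{k-1}$-coefficient receives a subleading contribution from $\binom{gU_{\bf n}(c,b)}{k}$ in addition to the leading contribution from the configuration $(k-2,1,0,\dots,0)$.

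The one flaw is in part (a), where you claim to ``extract the coefficient of $g^k$ by restricting the sum to configurations with $\sum_j s_j = k$.'' That is not a correct extraction: a configuration with $\sum_j s_j = m > k$ also contributes to the coefficient of $g^k$, through the lower-order terms in $g$ of the polynomials $\binom{gU_{j{\bf n}_0}(c,b)}{s_j}$ --- exactly the phenomenon you yourself exploit in part (b). The paper handles this by writing $\binom{x}{s}=\sum_{t=0}^{s}\alpha_{t;s}x^{t}$ and collecting all products $\prod_j U_{j{\bf n}_0}(c,b)^{t_j}$ with $0\le t_j\le s_j$ and $\sum_j t_j=k$. Your bound does survive this correction: for such a contribution one has $\deg_b \prod_j U_{j{\bf n}_0}(c,b)^{t_j} = n_2'\sum_j jt_j - \sum_j t_j \le n_2'\sum_j js_j - k = n_2-k$ since $t_j\le s_j$, and symmetrically for $c$. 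So the stated inequalities are still proved, but as written your argument for (a) silently drops these terms and should be patched along these lines.
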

The statement (a) is a partial result of Conjecture~14. In \cite{ERS24}, they conjectured that these inequalities are the equalities. Although the author have not shown it, this would be true.
The equality (\ref{eq: 15}) is the same as Conjecture~15. The equality (\ref{eq: 16-18}) is similar to Conjecture~16, 17, 18.
\begin{proof}
\textup{(a)} Let ${\bf n}=l{\bf n}_0$ with $l=\gcd(n_1,n_2)$ and ${\bf n}_0 \in N^{+}_{\mathrm{pr}}$. For any $s \in \mathbb{Z}_{\geq 0}$, since $\binom{x}{s}$ is a polynomial in $x$ with $\deg_{x}\binom{x}{s} = s$, we express
\begin{equation}
\binom{x}{s}=\sum_{t=0}^{s}\alpha_{t;s}x^{t}
\end{equation}
for some $\alpha_{t;s} \in \mathbb{Q}$.
Consider each term in (\ref{eq: u to tau}). For any $s_1,\dots,s_l$ with $s_1+2s_2+\cdots+ls_l=l$, we have
\begin{equation}
\begin{aligned}
&\ \prod_{j=1}^{l}\binom{gU_{j{\bf n}_0}(c,b)}{s_j}\\
=&\ \prod_{j=1}^{l}\sum_{t_j=0}^{s_j}\alpha_{t_j;s_j}\cdot(gU_{j{\bf n}_0}(c,b))^{t_j}\\
=&\ \sum_{\substack{0 \leq t_{i} \leq s_i, \forall i}}\alpha_{t_1;s_1}\cdots\alpha_{t_l;s_l}\left\{\prod_{j=1}^{l}(U_{j{\bf n}_0}(c,b))^{t_j}\right\}g^{t_1+\cdots+t_l}.
\end{aligned}
\end{equation}
Since $\tau^{(b,c)}({\bf n};k)$ is the coefficient of $g^{k}$, it may be expressed as a linear combinations of $\prod_{j=1}^{l}(U_{j{\bf n}_0}(c,b))^{t_j}$ such that $0\leq t_i \leq s_i$, $t_1+\cdots+t_l=k$, and $s_1+2s_2+\cdots+ls_l = l$. By Proposition~\ref{prop: 2}, we have
\begin{equation}
\begin{aligned}
&\ \deg_b\,\prod_{j=1}^{l}(U_{j{\bf n}_0}(c,b))^{t_j}\\
=&\ t_1(n'_2-1)+t_2(2n'_2-1)+\cdots+t_l(n'_l-1)\\
=&\ (t_1+2t_2+\cdots+lt_l)n'_2-(t_1+t_2+\cdots+t_l)\\
\leq&\ (s_1+2s_2+\cdots+ls_l)n'_2-k=ln'_2-k=n_2-k.
\end{aligned}
\end{equation}
Thus, we have $\deg_{b}\tau^{(b,c)}({\bf n};k) \leq n_2-k$. We may obtain $\deg_c \tau^{(b,c)}({\bf n}) \leq n_1-k$ by doing a similar argument.
\\
\textup{(b)}\ Since $\gcd(n_1,n_2)=1$, it holds that ${\bf n} \in N^{+}_{\mathrm{pr}}$. Thus, by Lemma~\ref{lem: u to tau}, we have $\tau^{(b,c)}({\bf n})=gU_{{\bf n}}(c,b)$ and $\tau^{(b,c)}({\bf n};1)=U_{\bf n}(c,b)$.
Consider
\begin{equation}\label{eq: expansion formula}
\tau^{(b,c)}(k{\bf n})=\sum_{\substack{
s_1,s_2,\dots,s_k \geq 0,\\
s_1+2s_2+\cdots+ks_k=k}}
\prod_{j=1}^{k}\binom{gU_{j{\bf n}}(c,b)}{s_j}.
\end{equation}
In the above sum, the term of $g^{k}$ appear if and only if $(s_1,s_2,\dots,s_k)=(k,0,\dots,0)$. Thus, $\tau^{(b,c)}(k{\bf n};k)$ is the same as the coefficient of $g^{k}$ in this term. We have
\begin{equation}\label{eq: binomial expansion}
\begin{aligned}
&\ \binom{gU_{{\bf n}}(c,b)}{k}\\
=&\ \frac{gU_{\bf n}(c,b)(gU_{\bf n}(c,b)-1)\cdots(gU_{\bf n}(b,c)-(k-1))}{k!}.
\end{aligned}
\end{equation}
The above numerator is a product of $k$-factors $gU_{\bf n}(c,b)-j$ ($j=0,1,\dots,k-1$). Thus, the coefficient of $g^k$ is 
\begin{equation}
\frac{(U_{\bf n}(c,b))^k}{k!}=\frac{(\tau^{(b,c)}({\bf n};1))^{k}}{k!}.
\end{equation}
Thus, (\ref{eq: 15}) holds.
\par
Next, we show (\ref{eq: 16-18}). Consider (\ref{eq: expansion formula}).
Then, a term $g^{k-1}$ appears only if $(s_1,s_2,\dots,s_k)$ is $(k,0,\dots,0)$ or $(k-2,1,0,\dots,0)$. The term corresponding to $(s_1,\dots,s_k)=(k,0,\dots,0)$ is in (\ref{eq: binomial expansion}). Since the numerator consists of the $k$-factors $gU_{\bf n}(c,b)-j$, the coefficient of $g^{k-1}$ comes form the following choice.
\begin{quote}
We choose $gU_{\bf n}(c,b)$ from $k-1$-factors, and we choose the number $-j$ from the other factor. 
\end{quote}
Thus, the coefficient of $g^{k-1}$ is
\begin{equation}
\begin{aligned}
-\frac{(1+2+\cdots+(k-1))}{k!}(U_{\bf n}(c,b))^{k-1}&=-\frac{1}{\cdot k!}\frac{(k-1)k}{2}(U_{\bf n}(c,b))^{k-1}\\
&=-\frac{1}{2\cdot(k-2)!}(U_{\bf n}(c,b))^{k-1}.
\end{aligned}
\end{equation}
The term corresponding to $(s_1,\dots,s_k)=(k-2,1,0,\dots,0)$ is
\begin{equation}
\begin{aligned}
&\ gU_{2{\bf n}}(c,b)\binom{gU_{\bf n}(c,b)}{k-2}
\\
=&\ \frac{(gU_{2{\bf n}}(c,b))(gU_{\bf n}(c,b))(gU_{\bf n}(c,b)-1)\cdots(gU_{\bf n}(b,c)-(k-3))}{(k-2)!}.
\end{aligned}
\end{equation}
The numerator consists of $k-1$ factors $gU_{2{\bf n}}(c,b)$ and $gU_{\bf n}(c,b)-j$ ($j=0,1,\dots,k-3$). Thus, the coefficient of $g^{k-1}$ is
\begin{equation}
\frac{1}{(k-2)!}(U_{\bf n}(c,b))^{k-2}U_{2{\bf n}}(c,b).
\end{equation}
Thus, we have
\begin{equation}
\begin{aligned}
&\ \tau^{(b,c)}(k{\bf n};k-1)\\
=&\ \frac{1}{(k-2)!}(U_{\bf n}(c,b))^{k-2}U_{2{\bf n}}(c,b)-\frac{1}{2\cdot(k-2)!}(U_{\bf n}(c,b))^{k-1}\\
=&\ \frac{(U_{\bf n}(c,b))^{k-2}}{(k-2)!}\left(U_{2{\bf n}}(c,b)-\frac{1}{2}U_{\bf n}(c,b)\right).
\end{aligned}
\end{equation}
Set $p_{\bf n}=U_{2{\bf n}}(c,b)-\frac{1}{2}U_{\bf n}(c,b)$. This is a polynomial independent of $k$. Moreover, since $\tau^{(b,c)}({\bf n};1)=U_{\bf n}(c,b)$, we have
\begin{equation}
\tau^{(b,c)}(k{\bf n};k-1)=\frac{(\tau^{(b,c)}({\bf n};1))^{k-2}}{(k-2)!}p_{\bf n}.
\end{equation}
\end{proof}

\bibliography{Some_coefficients_of_rank_2_cluster_scattering_diagrams}

@article{ERS24,
  title={Cluster scattering coefficients in rank 2},
  author={Elgin, T. and Reading, N. and Stella, S.},
  journal={arXiv preprint arXiv:2404.11381},
  year={2024}
}

@book{Nak23,
  author = {T.~Nakanishi},
  title = {{Cluster Algebras and Scattering Diagrams}},
  series = {MSJ Mem, 41},
  publisher = {Math.~Soc.~Japan},
  year = {2023},
}

@article{GHKK18,
  title={{Canonical bases for cluster algebras}},
  author={M.~Gross and P.~Hacking and S.~Keel and M.~Kontsevich},
  journal={J.~Amer.~Math.~Soc.},
  volume={31},
  number={2},
  pages={497--608},
  year={2018},
}

@article {Aka25,
    AUTHOR = {Akagi, Ryota},
     TITLE = {Explicit forms in lower degrees of rank 2 cluster scattering
              diagrams},
   JOURNAL = {Algebr. Comb.},
  FJOURNAL = {Algebraic Combinatorics},
    VOLUME = {8},
      YEAR = {2025},
    NUMBER = {4},
     PAGES = {1021--1067},
      ISSN = {2589-5486},
   MRCLASS = {13F60 (05E40)},
  MRNUMBER = {4952057},
       DOI = {10.5802/alco.432},
       URL = {https://doi.org/10.5802/alco.432},
}

@incollection{KS06,
  title={Affine structures and non-Archimedean analytic spaces},
  author={Kontsevich, M. and Soibelman, Y.},
  booktitle={The Unity of Mathematics: In Honor of the Ninetieth Birthday of IM Gelfand},
  pages={321--385},
  year={2006},
  publisher={Springer}
}

@article{GS11,
  title={From real affine geometry to complex geometry},
  author={Gross, M. and Siebert, B.},
  journal={Annals of mathematics},
  pages={1301--1428},
  year={2011},
  publisher={JSTOR}
}

@article{FZ02,
  title={Cluster algebras {I}: foundations},
  author={Fomin, S. and Zelevinsky, A.},
  journal={Journal of the American mathematical society},
  volume={15},
  number={2},
  pages={497--529},
  year={2002}
}

@article{FZ07,
  title={Cluster algebras {IV}: coefficients},
  author={Fomin, S. and Zelevinsky, A.},
  journal={Compositio Mathematica},
  volume={143},
  number={1},
  pages={112--164},
  year={2007},
  publisher={London Mathematical Society}
}

@article {GL25,
    AUTHOR = {Gr\"afnitz, Tim and Luo, Patrick},
     TITLE = {The dense region in scattering diagrams},
   JOURNAL = {Bull. Aust. Math. Soc.},
  FJOURNAL = {Bulletin of the Australian Mathematical Society},
    VOLUME = {112},
      YEAR = {2025},
    NUMBER = {2},
     PAGES = {290--302},
      ISSN = {0004-9727,1755-1633},
   MRCLASS = {14J33 (13F60)},
  MRNUMBER = {4955370},
       DOI = {10.1017/s000497272400128x},
       URL = {https://doi.org/10.1017/s000497272400128x},
}

@article{CGMMRSW17,
  title={The greedy basis equals the theta basis: a rank two haiku},
  author={Cheung, M., W. and Gross, M. and Muller, G. and Musiker, G. and Rupel, D. and Stella, S. and Williams, H.},
  journal={Journal of Combinatorial Theory, Series A},
  volume={145},
  pages={150--171},
  year={2017},
  publisher={Elsevier}
}

@article{LLZ14,
  title={Greedy elements in rank 2 cluster algebras},
  author={Lee, K. and Li, L. and Zelevinsky, A.},
  journal={Selecta Mathematica},
  volume={20},
  number={1},
  pages={57--82},
  year={2014},
  publisher={Springer}
}

@article{BLM25,
  title={Positivity of generalized cluster scattering diagrams},
  author={Burcroff, A. and Lee, K. and Mou, L.},
  journal={arXiv preprint arXiv:2503.03719},
  year={2025}
}

@article{BLMMR25,
  title={Cluster scattering diagrams via quiver moduli and tight gradings},
  author={Burcroff, Amanda and Lee, Kyungyong and Mou, Lang and Musiker, Gregg and Reineke, Markus},
  journal={arXiv preprint arXiv:2511.14672},
  year={2025}
}
\bibliographystyle{alpha}
\end{document}